\newtheorem{theorem}{Theorem}[section]
\newtheorem{lemma}[theorem]{Lemma}
\newtheorem{corollary}[theorem]{Corollary}
\newtheorem{definition}[theorem]{Definition} 
\newtheorem{notation}[theorem]{Notation}
\begin{document}

\title[Gibbs measure at high temperature]{Alternative proof of existence of Gibbs measure at high temperature}

\author[F. Kachapova]{Farida Kachapova}
\address{Department of Mathematical Sciences
\\Auckland University of Technology
\\New Zealand}

\email{farida.kachapova@aut.ac.nz}

\author[I. Kachapov]{Ilias Kachapov}
\address{Examination Academic Services\\ University of Auckland\\ New Zealand}
\email{kachapov@xtra.co.nz}

\date{\today}

\begin{abstract}
Mathematical models in equilibrium statistical mechanics describe physical systems with many particles interacting with an external force and with one another. Gibbs measure is a fundamental concept in this theory. In existing literature infinite-volume models are constructed as limits of finite models and existence of Gibbs measure for them is proven through DLR formalism. The general existence proofs are quite complicated and involve topology and cluster expansion.

In this paper we develop a more transparent and more constructive proof of existence of infinite Gibbs measure for a particular case of  interaction model at high temperature. The proof is based on a limiting procedure and involves estimates of series of semi-invariants and graph-related estimates. 
\end{abstract}

\subjclass[2010]{Primary 82B03; Secondary 82B20}

\keywords{Infinite particle system, Gibbs  modification, radius of interaction, thermodynamic limit, semi-invariant. 
}

\maketitle

\section{Introduction}
Mathematical models of statistical mechanics are described in many publications; see, for example, \cite{D68},  \cite{K77}, \cite{MM91}, \cite{Ma80}, \cite{B08} and \cite{Yan}. A generalised model of a physical system with many particles is based on a probability space. When an infinite model is constructed as a limit of finite models, there occurs a problem of existence of its probability measure (Gibbs measure). The general mathematical proof of existence of Gibbs measure was constructed using DLR approach. It was described in several books: \cite{Rue}, \cite{Pres}, \cite{MM91}, \cite{FV17}, and in most general form in \cite{Geor}. The general existence proof is quite complicated and involves topology, cluster expansion, and conditional probabilities. 

In this paper we provide a straightforward existence proof for a less general model. 
In \cite{K15} and \cite{K17} we studied a concept of an interaction model. The interaction model describes an equilibrium physical system at high temperature with many particles interacting with an external force and with one another. In this paper  we improve the concept of interaction model by giving a more general and detailed definition. For the infinite case of this model we prove existence of Gibbs measure as a limiting case of probability measures on the finite models. Our proof involves only basic probability, calculus and combinatorics, and is more transparent and constructive than the general proof. We believe that the constructions in this paper are of interest on their own even though the existence of infinite Gibbs measure is a well-known fact.

In Section \ref{section:finite} we 
introduce a probability space and characteristics for a finite interaction model, and give a definition of this model. In Section \ref{section:infinite} we use some estimations in graphs to describe one of interaction characteristics in a physical system (Lemma \ref{L exists}). Next in this section we state Convergence Theorem and use it to construct an infinite interaction model; then we briefly justify that the Ising and Potts models are particular cases of the interaction model, and its generalization includes the $n$-model. 

The remaining Sections \ref{section:estimations} and \ref{section:main_proof} give a detailed proof of the Convergence Theorem. In particular, in Section \ref{section:estimations} we produce more graph-related estimations and apply them to semi-invariants and their series. In Section \ref{section:main_proof} we show that the probability $P_N(A)$ of an event $A$ in a finite model can be written as a series of semi-invariants. We finalise the proof of 
the Convergence Theorem in Section \ref{section:main_proof} by proving absolute and uniform convergence of the aforementioned series for 
$P_N(A)$ and taking a limit as $N\rightarrow\infty$. 

\section{Finite Interaction Model}
\label{section:finite}

\subsection{Gibbs Modification}
Gibbs modification is used 
to modify a given probability in order to describe interaction between particles.

For any probability measure $P$ on $(\Omega,\Sigma)$ denote $\langle\cdot\rangle_P$ the expectation with respect to $P$. 
\begin{definition}
Suppose $P$ is a probability measure on $(\Omega,\Sigma)$ and $U$ is a bounded random variable on $(\Omega,\Sigma)$.

\textbf{Gibbs modification} of the probability measure $P$ by the random variable $U$ is denoted $P_U$ and is defined as follows.
For any event $A\in\Sigma$:
\[P_{U}(A)= 
\frac{\langle I_A \,e^{U}\rangle_P}{\langle e^{U}\rangle_P},\]
where $I_A$ denotes the indicator of the event $A$.
\end{definition}

\begin{lemma}
In conditions of the previous definition:
\begin{enumerate}[1)]
\item $P_U$ is a probability measure on $(\Omega,\Sigma)$; 

\item for any random variable $Y$ on $(\Omega,\Sigma)$, $\langle Y\rangle_{P_U}=\dfrac{\langle Y \,e^{U}\rangle_P}{\langle e^{U}\rangle_P}$.
\end{enumerate}
\label{lemma:modification}
\end{lemma}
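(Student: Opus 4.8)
The plan is to treat both parts as consequences of the single observation that the Gibbs modification $P_U$ is the measure with density $\rho = e^{U}/\langle e^{U}\rangle_P$ with respect to $P$. First I would record the key boundedness fact: since $U$ is bounded, say $|U|\le M$, the random variable $e^{U}$ satisfies $e^{-M}\le e^{U}\le e^{M}$ pointwise, so $\langle e^{U}\rangle_P$ is a finite number lying in $[e^{-M},e^{M}]$, and in particular it is strictly positive. This guarantees that the quotient in the definition of $P_U$ is well defined and that $\rho$ is a bounded, strictly positive random variable.

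For part (1), I would verify the three axioms of a probability measure directly from the definition. Non-negativity of $P_U(A)$ is immediate because $I_A\,e^{U}\ge 0$ and the denominator is positive. Normalisation follows from $I_\Omega\equiv 1$, which gives $P_U(\Omega)=\langle e^{U}\rangle_P/\langle e^{U}\rangle_P=1$. For countable additivity, given pairwise disjoint events $A_1,A_2,\dots$ I would write $I_{\bigcup_n A_n}=\sum_n I_{A_n}$ and pass the expectation through the sum, using linearity together with the monotone convergence theorem, to obtain $\langle I_{\bigcup_n A_n}\,e^{U}\rangle_P=\sum_n\langle I_{A_n}\,e^{U}\rangle_P$. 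Dividing by $\langle e^{U}\rangle_P$ then yields $P_U(\bigcup_n A_n)=\sum_n P_U(A_n)$; the interchange is legitimate because all summands are non-negative.

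For part (2), I would establish the identity $\langle Y\rangle_{P_U}=\langle Y\,e^{U}\rangle_P/\langle e^{U}\rangle_P$ by the standard bootstrapping over the class of random variables $Y$. The case $Y=I_A$ is exactly the definition of $P_U(A)$; by linearity the identity extends to simple functions; by the monotone convergence theorem it extends to arbitrary non-negative $Y$; and writing a general $Y=Y^{+}-Y^{-}$ gives the full statement for every integrable $Y$. Equivalently, this is the change-of-variables formula $\langle Y\rangle_{P_U}=\langle Y\rho\rangle_P$ for the density $\rho$ identified above.

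I expect no serious obstacle here, as the statement is a standard measure-theoretic fact. The only points requiring genuine care are confirming that $\langle e^{U}\rangle_P$ is finite and strictly positive, which is precisely where the boundedness of $U$ enters, and invoking the appropriate convergence theorem to justify interchanging the expectation with the infinite sum in the additivity step and with the limit in the bootstrapping argument.
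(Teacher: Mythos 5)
Your proof is correct, but note that the paper itself offers no argument for this lemma at all: it simply states that the proof is well known and refers the reader to \cite{K17}. So there is nothing in the paper to compare against; your write-up supplies exactly the standard argument that the citation points to, namely viewing $P_U$ as the measure with density $e^{U}/\langle e^{U}\rangle_P$ with respect to $P$, using boundedness of $U$ to get $0<e^{-M}\leqslant\langle e^{U}\rangle_P\leqslant e^{M}<\infty$, checking the axioms via monotone convergence, and extending part (2) from indicators to simple functions to non-negative and then integrable $Y$. The only point worth flagging is that the lemma as stated says ``for any random variable $Y$,'' whereas the identity only makes sense for $Y$ integrable (equivalently under $P$ or $P_U$, since $e^{U}$ is bounded above and below by positive constants); your final bootstrapping step correctly restricts to integrable $Y$, which is the natural reading of the statement, so this is a defect of the statement's phrasing rather than of your proof.
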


Proof is well-known and can be found, for example, in \cite{K17}.

\subsection{Construction of Probability Space}

\begin{definition}
We introduce some objects that will be used to construct an interaction model.
\begin{enumerate}[1)]
\item Fix a natural number $\nu\geqslant1$ and consider a $\nu$-dimensional integer lattice:
\[\mathbb{Z}^\nu= \{\boldsymbol{t}=(t_1,\ldots,t_\nu)\mid t_i\in \mathbb{Z}, i=1,\ldots,\nu\}\]
with the distance between any two points $\boldsymbol{s},\boldsymbol{t}\in\mathbb{Z}^\nu$ defined by:
\[\|\boldsymbol{s}-\boldsymbol{t}\|=\sum_{i=1}^{\nu} |s_i-t_i|.\]

\item For any $\boldsymbol{t}\in\mathbb{Z}^\nu$ we denote $\Omega_{\boldsymbol{t}}=\mathbb{R}$ and $\Sigma_{\boldsymbol{t}}$ the Borel $\sigma$-algebra on $\mathbb{R}$; we fix a probability measure $P_{\boldsymbol{t}}$ on $(\Omega_{\boldsymbol{t}},\Sigma_{\boldsymbol{t}})$.

We call $\{P_{\boldsymbol{t}},\boldsymbol{t}\in\mathbb{Z}^\nu\}$ the \textbf{initial probabilities}.
\end{enumerate}
\label{def:P_t}
\end{definition}

\begin{definition}
\begin{enumerate}[1)]
\item By the Hahn-Kolmogorov theorem, for any integer \\$k>1$ and distinct $\boldsymbol{t}_1,\ldots,\boldsymbol{t}_k\in\mathbb{Z}^\nu$ there exists a unique probability space 
\\$(\Omega_{\boldsymbol{t}_1}\times\ldots\times\Omega_{\boldsymbol{t}_k},\Sigma_{\boldsymbol{t}_1\ldots\boldsymbol{t}_k} ,P_{\boldsymbol{t}_1\ldots\boldsymbol{t}_k})$, where $\Sigma_{\boldsymbol{t}_1\ldots\boldsymbol{t}_k}$ is the $\sigma$-algebra generated by elements of the Cartesian product $\Sigma_{\boldsymbol{t}_1}\times\ldots\times\Sigma_{\boldsymbol{t}_k}$ and the probability measure $P_{\boldsymbol{t}_1\ldots,\boldsymbol{t}_k}$ has the property:
\[P_{\boldsymbol{t}_1\ldots\boldsymbol{t}_k}(F_1\times\ldots\times F_k)=P_{\boldsymbol{t}_1}(F_1)\times\ldots\times P_{\boldsymbol{t}_k}(F_k)\]
for any $F_i\in\Sigma_{\boldsymbol{t}_i}\;(i=1,\ldots,k)$.
 
\item By the Kolmogorov extension theorem, there exists a probability space $(\Omega,\Sigma, P_0)$, where the elements are described as follows. 
\begin{enumerate}[(a)]
\item $\Omega=\mathbb{R}^{\mathbb{Z}^\nu}$, that is $\Omega
=\left\lbrace 
\omega\mid\omega:\mathbb{Z}^\nu\rightarrow \mathbb{R}\right\rbrace$. 

An element $\omega$ of $\Omega$ is called a  \textbf{configuration} and is interpreted as a state of a physical system.

\item $\Sigma$ is the $\sigma$-algebra generated by sets of the form 
$\{\omega\in\Omega\mid\omega(\boldsymbol{t})\in F\}$ for all 
$\boldsymbol{t}\in\mathbb{Z}^\nu$ and $F\in\Sigma_{\boldsymbol{t}}$.
\item $P_0$ is a unique probability measure on $(\Omega,\Sigma)$ such that for any integer $k>1$, distinct $\boldsymbol{t}_1,\ldots,\boldsymbol{t}_k\in\mathbb{Z}^\nu$ and  $F_1\in\Sigma_{\boldsymbol{t}_1},\ldots,F_k\in\Sigma_{\boldsymbol{t}_k}$:
\[P_0(\omega(\boldsymbol{t}_1)\in F_1,\ldots,\omega(\boldsymbol{t}_k) \in F_k)
=P_{\boldsymbol{t}_1\ldots\boldsymbol{t}_k}(F_1\times\ldots\times F_k).\]

Thus, $P_0$ is uniquely defined by the initial probabilities $\{P_{\boldsymbol{t}},\boldsymbol{t}\in\mathbb{Z}^\nu\}$.
\end{enumerate}

\item For any $\boldsymbol{t}\in\mathbb{Z}^\nu$ a function $X_{\boldsymbol{t}}:\Omega\rightarrow \mathbb{R}$ is defined by the following:
\[X_{\boldsymbol{t}}(\omega)=\omega(\boldsymbol{t}).\]
$X$ denotes $\{X_{\boldsymbol{t}}\mid \boldsymbol{t}\in\omega\}$.
\end{enumerate}
\label{def:initial}
\end{definition}

For the rest of the paper we fix the objects $\mathbb{Z}^\nu$, $(\Omega,\Sigma, P_0)$, and $X$ from these definitions. For brevity we denote $\langle\cdot\rangle_{P_0}$ as $\langle\cdot\rangle_0$.

\begin{lemma}
$\{X_{\boldsymbol{t}}\mid \boldsymbol{t}\in \mathbb{Z}^\nu\}$ is an independent random field on the probability space $(\Omega,\Sigma, P_0)$.
\end{lemma}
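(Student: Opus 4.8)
The plan is to verify the two requirements contained in the phrase \emph{independent random field}: that each $X_{\boldsymbol{t}}$ is a genuine random variable on $(\Omega,\Sigma,P_0)$, and that any finite subcollection of the $X_{\boldsymbol{t}}$ is mutually independent. Both follow directly from the construction of $\Sigma$ and the defining property of $P_0$ in Definition \ref{def:initial}, so I expect no serious obstacle here, only a careful unwinding of definitions.

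First I would check measurability. For a fixed $\boldsymbol{t}\in\mathbb{Z}^\nu$ and any $F\in\Sigma_{\boldsymbol{t}}$, the preimage
\[X_{\boldsymbol{t}}^{-1}(F)=\{\omega\in\Omega\mid\omega(\boldsymbol{t})\in F\}\]
is exactly one of the sets used to generate $\Sigma$, hence lies in $\Sigma$. Therefore each $X_{\boldsymbol{t}}$ is $(\Sigma,\Sigma_{\boldsymbol{t}})$-measurable, i.e. a random variable.

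Next I would establish independence. Fix distinct $\boldsymbol{t}_1,\ldots,\boldsymbol{t}_k\in\mathbb{Z}^\nu$ and Borel sets $F_1\in\Sigma_{\boldsymbol{t}_1},\ldots,F_k\in\Sigma_{\boldsymbol{t}_k}$. Since $\{X_{\boldsymbol{t}_i}\in F_i\}=\{\omega(\boldsymbol{t}_i)\in F_i\}$, the defining property of $P_0$ together with the product property of $P_{\boldsymbol{t}_1\ldots\boldsymbol{t}_k}$ gives
\[P_0\Big(\bigcap_{i=1}^{k}\{X_{\boldsymbol{t}_i}\in F_i\}\Big)=P_{\boldsymbol{t}_1\ldots\boldsymbol{t}_k}(F_1\times\cdots\times F_k)=\prod_{i=1}^{k}P_{\boldsymbol{t}_i}(F_i).\]
To identify the right-hand factors as the marginals $P_0(X_{\boldsymbol{t}_i}\in F_i)$, I would apply the same property to the single set $F_i$ padded with the full space at an auxiliary point: taking $F=\mathbb{R}=\Omega_{\boldsymbol{s}}$ there, the factor $P_{\boldsymbol{s}}(\mathbb{R})=1$ drops out and yields $P_0(X_{\boldsymbol{t}_i}\in F_i)=P_{\boldsymbol{t}_i}(F_i)$.

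Substituting back gives
\[P_0\Big(\bigcap_{i=1}^{k}\{X_{\boldsymbol{t}_i}\in F_i\}\Big)=\prod_{i=1}^{k}P_0(X_{\boldsymbol{t}_i}\in F_i),\]
which is precisely mutual independence of $X_{\boldsymbol{t}_1},\ldots,X_{\boldsymbol{t}_k}$. As the finite subcollection was arbitrary, the whole family $\{X_{\boldsymbol{t}}\mid\boldsymbol{t}\in\mathbb{Z}^\nu\}$ is independent, which completes the argument. The only point requiring a word of care is the marginal computation, since the product property in Definition \ref{def:initial} is stated for $k>1$; padding with the full space handles this case cleanly.
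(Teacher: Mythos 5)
Your proof is correct and follows the only route available, which is exactly the one the paper gestures at: its entire proof is ``follows from the definitions,'' and your argument is that unwinding, done carefully. In fact your write-up supplies details the paper omits entirely, including the nice observation that the marginal identity $P_0(X_{\boldsymbol{t}}\in F)=P_{\boldsymbol{t}}(F)$ needs the padding trick because the product property in Definition \ref{def:initial} is only stated for $k>1$.
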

\begin{proof}
Proof follows from the definitions.
\end{proof}
\begin{definition}
\begin{enumerate}[1)]
\item Consider a \textbf{graph} $(V,E)$, where the set of vertices $V$ is a finite subset of $\mathbb{Z}^\nu$ and $E$ is the set of edges; each edge can be regarded as a pair of distinct vertices (there are no loops). The \textit{length} of each edge is the distance between its end vertices.

The graph $(V,E)$ is called \textbf{1-connected} if it is connected and the length of any of its edges equals 1. 
\item For a finite set $B\subset \mathbb{Z}^\nu$ with at least two elements define its \textbf{size} $S(B)$ as the minimum number of edges of 1-connected graphs $(V,E)$ such that  $B\subseteq V$.
\end{enumerate}
\label{def:size}
\end{definition}

\textit{Remark.} For any finite set $B$ there is a cube in $\mathbb{Z}^\nu$ containing $B$; this cube with all its edges of length 1 is a 1-connected graph. So $S(B)$ is always defined.

\begin{notation}
For any finite subset $B$ of $\mathbb{Z}^\nu$ denote $\Sigma_B$ the $\sigma$-algebra generated by sets of the form 
$\{\omega\in\Omega\mid\omega(\boldsymbol{t})\in F\}$ for all 
$\boldsymbol{t}\in B$ and  $F\in\Sigma_{\boldsymbol{t}}$.
\end{notation}

\begin{definition}
Here we introduce three 
characteristics $\lambda,r,\Phi$ of an interaction model and a set $\mathfrak{B}$.
\begin{enumerate}[1)]
\item $\lambda\in\mathbb{R}$, $\lambda\geqslant0$. $\lambda$ is inversely proportional to the temperature of the physical system.
\item $r\in\mathbb{R}$, $r\geqslant1$. $r$ is called the \textbf{radius of interaction}.

\item Denote $\mathfrak{B}=\{B\subset\mathbb{Z}^\nu\,\,|\,\, 1\leqslant S(B)\leqslant r\}.$ 

\item For each $B\in\mathfrak{B}$, $\Phi_B$ is a random variable on $(\Omega,\Sigma_B)$ that satisfies the condition: 
\[|\Phi_B|\leqslant\lambda.\]
$\Phi$ denotes $\{\Phi_B\mid B\in \mathfrak{B}\}$. $\Phi$ 
is called the \textbf{potential} of the system. 
\end{enumerate}
\label{def:parameters}
\end{definition}

\textit{Remarks}. 1) Clearly, if $B\in\mathfrak{B}$, then $B$ is finite.

2) Two particles (represented by points of $\mathbb{Z}^\nu$) interact only if they both belong to some set $B$ ($B\in\mathfrak{B}$), and $\Phi_B$ is the interaction energy of all elements of $B$.

3) In our definition of $\mathfrak{B}$ we have only sets with $S(B)\geqslant 1$, i.e. sets with at least two elements. The general case (that includes one-element sets $B$) can be reduced to this case by a single Gibbs modification as shown in \cite{K17}, pg.343-344.

\subsection{Finite Interaction Model}
\label{section:finite_model}

\begin{definition}
A
\textbf{finite interaction model} with characteristics $\lambda,r,\Phi$, $\{P_{\boldsymbol{t}}\mid \boldsymbol{t}\in\mathbb{Z}^\nu\}$ is a sequence $(\Lambda,U_{\Lambda},P_{\Lambda})$ of three objects defined as follows.
\begin{enumerate}[1)]
\item
$\Lambda$ is an arbitrary finite subset of the lattice $\mathbb{Z}^\nu$. 

\item A function $U_{\Lambda}:\Omega\rightarrow\mathbb{R}$ is called the \textbf{interaction energy} of $\Lambda$ and is defined by the following:
\[\text{ for any  }\omega\in\Omega, \;U_{\Lambda} (\omega)=\sum_{B\in\mathfrak{B},B\subseteq\Lambda}\Phi_B(\omega), 
\]
where the set $\mathfrak{B}$ is defined in Definition \ref{def:parameters}.

$U_{\Lambda}(\omega)$ characterizes the energy of configuration $\omega$ in  $\Lambda$.

\item Denote $P_{\Lambda}=(P_0)_{U_{\Lambda}}$, the Gibbs modification of the probability $P_0$ by the interaction energy $U_{\Lambda}$.
\end{enumerate}  
\label{def:finite_model}
\end{definition}

In \cite{K17} it was shown that $(\Omega,\Sigma, P_{\Lambda})$ is a probability space.

\section{Infinite Interaction Model}
\label{section:infinite}

\subsection{Tracks in Graphs}

\begin{definition}
A \textbf{track} is a sequence $tr=(\boldsymbol{t}_0,\boldsymbol{t}_1,\ldots,\boldsymbol{t}_n)$ of points in $\mathbb{Z}^\nu$ such that $\boldsymbol{t}_0=\boldsymbol{t}_n$ and $\|\boldsymbol{t}_i-\boldsymbol{t}_{i-1}\|=1$, $i=1,\ldots,n$.
\end{definition}

\begin{lemma}
Fix $\boldsymbol{t}_0\in\mathbb{Z}^\nu$ and $n>1$. The number of tracks of the form $(\boldsymbol{t}_0,\boldsymbol{t}_1,\ldots,\boldsymbol{t}_n)$ is not greater than $(2\nu)^{n-1}$.
\label{lemma:track_1}
\end{lemma}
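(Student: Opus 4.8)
The plan is to count tracks by tracking how many free choices are available at each step, and then to observe that the closure requirement $\boldsymbol{t}_n=\boldsymbol{t}_0$ removes exactly one degree of freedom. The key elementary fact is that in $\mathbb{Z}^\nu$ every point has exactly $2\nu$ neighbours at distance $1$: from $\boldsymbol{t}_{i-1}$ the next point $\boldsymbol{t}_i$ must differ in exactly one coordinate by $\pm1$, giving $\nu$ choices of coordinate times $2$ choices of sign.

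First I would reduce the problem to counting open walks. Consider the map that sends a track $(\boldsymbol{t}_0,\boldsymbol{t}_1,\ldots,\boldsymbol{t}_n)$ to its truncation $(\boldsymbol{t}_0,\boldsymbol{t}_1,\ldots,\boldsymbol{t}_{n-1})$, a walk of length $n-1$ in which each consecutive pair is at distance $1$. This map is injective: since every track satisfies $\boldsymbol{t}_n=\boldsymbol{t}_0$ and $\boldsymbol{t}_0$ is fixed, the discarded final point is completely determined by the track being closed, so no information is lost by deleting it. Hence the number of tracks is at most the number of walks $(\boldsymbol{t}_0,\ldots,\boldsymbol{t}_{n-1})$ with fixed initial point $\boldsymbol{t}_0$ and unit steps.

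Next I would bound the number of such walks directly. Having fixed $\boldsymbol{t}_0$, each of the successive points $\boldsymbol{t}_1,\ldots,\boldsymbol{t}_{n-1}$ is chosen as a distance-$1$ neighbour of the preceding point, and by the elementary fact above there are at most $2\nu$ possibilities at each of these $n-1$ steps. Multiplying the per-step bounds gives at most $(2\nu)^{n-1}$ walks, and therefore at most $(2\nu)^{n-1}$ tracks.

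There is essentially no serious obstacle here; the only point requiring care is the bookkeeping of exponents, namely recognizing that the closure condition is precisely what reduces the naive count $(2\nu)^{n}$ of free length-$n$ walks down to $(2\nu)^{n-1}$. One could instead phrase the argument as: the final step $\boldsymbol{t}_{n-1}\to\boldsymbol{t}_n=\boldsymbol{t}_0$ is not free but is prescribed, contributing at most one valid completion (and contributing none unless $\|\boldsymbol{t}_{n-1}-\boldsymbol{t}_0\|=1$), so the $2\nu$ factor from the last step is replaced by a factor of $1$. Both phrasings yield the stated bound $(2\nu)^{n-1}$.
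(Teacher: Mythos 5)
Your proof is correct and follows essentially the same argument as the paper: fix $\boldsymbol{t}_0$, allow at most $2\nu$ choices for each of $\boldsymbol{t}_1,\ldots,\boldsymbol{t}_{n-1}$, and note that the closure condition $\boldsymbol{t}_n=\boldsymbol{t}_0$ leaves at most one choice for the final point, giving the bound $(2\nu)^{n-1}$. Your alternative phrasing via truncation to an open walk is just a repackaging of this same counting, so there is nothing further to reconcile.
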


\begin{proof}
The first element $\boldsymbol{t}_0$ is fixed. There are $2\nu$ choices for element $\boldsymbol{t}_1$, at most $2\nu$ choices for element $\boldsymbol{t}_2$, at most $2\nu$ choices for element $\boldsymbol{t}_3$, $\ldots$, and only one choice for element $\boldsymbol{t}_n$, since $\boldsymbol{t}_n=\boldsymbol{t}_0$. So the total number of tracks is at most $(2\nu)^{n-1}$.
\end{proof}

\begin{definition}
Suppose $B$ is a finite subset of $\mathbb{Z}^\nu$ and $m=S(B)$. There is a 1-connected graph $(V,E)$ such that $B\subseteq V$, and the number of edges in  this graph is minimal and equals $m$. We denote $G_B$ the first (in lexicographic order) of such graphs and call it the \textbf{associated graph} of $B$.
\end{definition}

\begin{lemma}
Suppose $B$ is a finite subset of $\mathbb{Z}^\nu$ and $m=S(B)$. Then the graph $G_B$ is a tree with $m$ edges and $m+1$ vertices.
\label{lemma:assoc_graph}
\end{lemma}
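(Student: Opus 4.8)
The plan is to separate the statement into three claims: that $G_B$ has exactly $m$ edges, that it is a tree (connected and acyclic), and that it has $m+1$ vertices. The first is immediate from the definitions, since $G_B$ is selected among the $1$-connected graphs $(V,E)$ with $B\subseteq V$ whose number of edges is minimal, and that minimum is by definition $m=S(B)$; the lexicographic rule merely picks one such graph and does not affect its edge-count. Connectedness is likewise built into the definition of $1$-connectedness. Thus the real content is to show that $G_B$ has no cycle, after which the vertex-count follows from the standard tree identity.

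To establish acyclicity I would argue by contradiction, exploiting the minimality of $S(B)$. Suppose $G_B=(V,E)$ contained a cycle, and let $e$ be any edge lying on that cycle. A standard fact from graph theory is that an edge on a cycle is not a bridge, so $(V,E\setminus\{e\})$ remains connected. Deleting an edge removes no vertices, so $B\subseteq V$ still holds, and every surviving edge still has length $1$; hence $(V,E\setminus\{e\})$ is again a $1$-connected graph containing $B$, but now with only $m-1$ edges. This contradicts the fact that $m=S(B)$ is the least possible number of edges. Therefore $G_B$ contains no cycle, and being connected as well, it is a tree.

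Finally, once $G_B$ is known to be a tree with $m$ edges, the identity $|V|=|E|+1$ valid for every finite tree yields at once that $G_B$ has $m+1$ vertices. I expect the only delicate point to be the justification that removing a cycle edge preserves connectivity (equivalently, that such an edge is not a bridge); everything else is bookkeeping against the definitions of $S(B)$ and of $1$-connectedness. It is worth noting in passing that the same minimality argument rules out two distinct edges joining the same pair of vertices, so that $G_B$ is a genuine simple graph and the cycle argument applies without ambiguity; moreover, since $B$ has at least two elements we have $m\geqslant 1$, so $G_B$ is nontrivial and connectedness is meaningful.
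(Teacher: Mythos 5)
Your proof is correct and follows essentially the same route as the paper: connectedness from the definition, acyclicity by removing a cycle edge (which preserves connectivity and the vertex set) to contradict the minimality of $m=S(B)$, and then the tree identity $|V|=|E|+1$ to count vertices. The extra remarks you add (the non-bridge justification, simplicity of the graph, $m\geqslant 1$) only make explicit what the paper leaves implicit.
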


\begin{proof}
A tree is a connected graph with no cycles. The definition of size implies that $G_B$ is connected. 

If $G_B$ has a cycle, then we can remove one edge of the cycle thus reducing the number of edges while the graph remains connected with the same set of vertices. That contradicts the choice of $G_B$ as the graph with minimal number of edges. 

By the definition $G_B$ has $m$ edges. Since it is a tree, the number of its vertices is $m+1$.
\end{proof}

\begin{definition}
Suppose $B$ is a finite subset of $\mathbb{Z}^\nu$ and $m=S(B)$. We create the \textbf{associated track} $tr_B$ of $B$ in the following two steps.

1. We obtain a graph $G'_B$ from the associated graph $G_B$ by adding for every its edge another edge with the same ends.

2. The new graph $G'_B$ has $2m$ edges and each of its vertices has an even degree. Therefore there is an Eulerian path in $G'_B$, i.e. a closed path that includes every edge of the graph exactly once. We denote $(e_1,e_2,\ldots,e_{2m})$ the first (in lexicographic order) of such paths. 

We define $tr_B$ as the corresponding sequence $(\boldsymbol{t}_0,\boldsymbol{t}_1,\boldsymbol{t}_2,\ldots,\boldsymbol{t}_{2m})$ of vertices, that is $\{\boldsymbol{t}_{i-1},\boldsymbol{t}_i\}$ are the ends of $e_i$ $(i=1,2,\ldots,2m)$.
\end{definition}

\begin{definition}
Suppose $B\in\mathfrak{B}$. Its associated track has the form  \\$tr_B=(\boldsymbol{t}_0,\boldsymbol{t}_1,\boldsymbol{t}_2,\ldots,\boldsymbol{t}_{2m})$, where $m=S(B)$ and $m\leqslant r$. 

We define the \textbf{extended track}
$tr'_B=(\boldsymbol{t}_0,\boldsymbol{t}_1,\boldsymbol{t}_2,\ldots,\boldsymbol{t}_{2m},\boldsymbol{t}_{2m+1},\ldots,\boldsymbol{t}_{2r})$ of $B$ by the following:
\begin{itemize}
\item for $i=1,\ldots,r-m$ each point $\boldsymbol{t}_{2m+i}$ is obtained from $\boldsymbol{t}_{2m+i-1}$
by increasing its first coordinate by 1; 
\item for $i=r-m+1,\ldots,2r-2m$ each point $\boldsymbol{t}_{2m+i}$ is obtained from $\boldsymbol{t}_{2m+i-1}$
by decreasing its first coordinate by 1.
\end{itemize}
\end{definition}

\begin{lemma}
\begin{enumerate}[1)]
\item For any finite subset $B$ of $\mathbb{Z}^\nu$, $tr_B$ is a track.
\item For any $B\in\mathfrak{B}$, $tr'_B$ is a track.
\end{enumerate}
\label{lemma:track_2}
\end{lemma}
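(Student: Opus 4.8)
The plan is to verify, for each claim, the two defining properties of a track: that the sequence is closed and that consecutive points are at lattice-distance $1$. For part 1) I would argue from the definition of $tr_B$ as the vertex sequence of an Eulerian path in $G'_B$. Each pair $\{\boldsymbol{t}_{i-1},\boldsymbol{t}_i\}$ is by construction the set of ends of an edge $e_i$ of $G'_B$; since every edge of $G'_B$ is a duplicate of an edge of the associated graph $G_B$, and $G_B$ is $1$-connected, each such edge has length $1$, whence $\|\boldsymbol{t}_i-\boldsymbol{t}_{i-1}\|=1$ for all $i=1,\ldots,2m$. Moreover, an Eulerian path in $G'_B$ is by definition a \emph{closed} path, so its initial and terminal vertices coincide, giving $\boldsymbol{t}_0=\boldsymbol{t}_{2m}$. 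These are exactly the two conditions in the definition of a track.

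For part 2) I would build on part 1). The distance condition for the indices $1,\ldots,2m$ is inherited from $tr_B$, and for the appended indices each point $\boldsymbol{t}_{2m+i}$ differs from its predecessor only by a change of $\pm1$ in the first coordinate, so every new consecutive pair is again at distance $1$. It then remains to check closure, i.e.\ $\boldsymbol{t}_0=\boldsymbol{t}_{2r}$. Here I would track the first coordinate: starting from $\boldsymbol{t}_{2m}$, it is increased once for each $i=1,\ldots,r-m$ and decreased once for each $i=r-m+1,\ldots,2r-2m$; since each block contains exactly $r-m$ steps and no other coordinate is altered, the two displacements cancel and $\boldsymbol{t}_{2r}=\boldsymbol{t}_{2m}$. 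Combining this with $\boldsymbol{t}_{2m}=\boldsymbol{t}_0$ from part 1) yields $\boldsymbol{t}_{2r}=\boldsymbol{t}_0$.

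Both parts reduce to bookkeeping, so I do not expect a genuine obstacle; the only point needing care is the index count in the extended track, where one must confirm that the increasing and decreasing blocks of first-coordinate steps have equal length $r-m$, so that the walk returns to its starting point. The underlying input is the earlier structural information: $G_B$ is $1$-connected, which gives unit-length edges, and it is a tree with $m$ edges by Lemma \ref{lemma:assoc_graph}, so $G'_B$ has $2m$ edges with all vertices of even degree and hence admits the Eulerian circuit on which the whole construction rests.
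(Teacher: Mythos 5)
Your proof is correct and follows essentially the same route as the paper, whose proof is simply the remark that the claim ``follows from the definitions''; you have filled in exactly the bookkeeping that remark leaves implicit (unit-length edges of $G_B$ inherited by the Eulerian circuit in $G'_B$, and the cancellation of the $r-m$ increasing and $r-m$ decreasing first-coordinate steps). No gap: your verification is what the paper's one-line proof intends.
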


\begin{proof}
Follows from the definitions.
\end{proof}

\subsection{Estimations in Graphs}

\begin{lemma}
Suppose $tr=(\boldsymbol{t}_0,\boldsymbol{t}_1,\boldsymbol{t}_2,\ldots,\boldsymbol{t}_{2r})$ is a track. Then there are at most $2^{2r-1}$ sets $C\in \mathfrak{B}$ such that $\boldsymbol{t}_0\in C$ and $tr'_C=tr$.
\label{lemma:estim_1}
\end{lemma}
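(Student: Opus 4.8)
The plan is to show that any admissible $C$ must be a subset of the (small) set of points visited by the track $tr$, and then to bound the number of such subsets by a direct count. Fix the track $tr=(\boldsymbol{t}_0,\boldsymbol{t}_1,\ldots,\boldsymbol{t}_{2r})$ and call $C\in\mathfrak{B}$ \emph{admissible} if $\boldsymbol{t}_0\in C$ and $tr'_C=tr$. The goal is to produce an injection from the admissible sets into the subsets of a set of size at most $2r-1$.

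First I would use the structure of the extended track to locate where every element of $C$ can live. For an admissible $C$ put $m=S(C)$; by Lemma \ref{lemma:assoc_graph} the associated graph $G_C$ is a tree with $m+1$ vertices, and by the definition of size we have $C\subseteq V(G_C)$. The associated track $tr_C=(\boldsymbol{t}_0,\ldots,\boldsymbol{t}_{2m})$ is the sequence of vertices of an Eulerian path in the doubled tree $G'_C$, which traverses every edge of $G'_C$; since $m\geqslant1$ the tree $G_C$ is connected with at least one edge, so every vertex of $G_C$ is an endpoint of some edge and therefore occurs among $\boldsymbol{t}_0,\ldots,\boldsymbol{t}_{2m}$. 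By construction $tr_C$ is the initial segment of the extended track $tr'_C=tr$, so $V(G_C)\subseteq\{\boldsymbol{t}_0,\boldsymbol{t}_1,\ldots,\boldsymbol{t}_{2r}\}$, and hence every element of $C$ appears as an entry of $tr$.

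Next I would count the distinct entries of $tr$. Because $tr$ is a track we have $\boldsymbol{t}_{2r}=\boldsymbol{t}_0$, so the distinct points occurring in $tr$ all lie in $\{\boldsymbol{t}_0,\boldsymbol{t}_1,\ldots,\boldsymbol{t}_{2r-1}\}$, a set of at most $2r$ points. Every admissible $C$ is therefore a subset of this vertex set that contains the prescribed point $\boldsymbol{t}_0$, and such a $C$ is completely determined by which of the remaining (at most $2r-1$) points it contains. Consequently the number of admissible $C$ is at most $2^{2r-1}$, as claimed.

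The argument is essentially a counting estimate, so the only delicate point is the containment of $C$ in the entries of $tr$; the main (mild) obstacle is justifying that the associated track visits \emph{every} vertex of the tree $G_C$, which I would handle via the Eulerian-path construction above. Once that containment is in place, the bound follows purely from counting subsets of a set of size at most $2r$ subject to containing one fixed element.
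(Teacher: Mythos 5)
Your proof is correct and takes essentially the same route as the paper's: both reduce the claim to the containment of $C$ in the set of points visited by $tr$ and then count subsets of the at most $2r-1$ points other than $\boldsymbol{t}_0$. The only difference is that you spell out, via the Eulerian-path construction, why every element of $C$ appears among the entries of $tr'_C$ --- a step the paper's proof leaves implicit.
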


\begin{proof}
Since the track $tr$ is a closed path, we have $\boldsymbol{t}_0=\boldsymbol{t}_{2r}$. Denote $V_1=\{\boldsymbol{t}_1,\boldsymbol{t}_2,\ldots,\boldsymbol{t}_{2r-1}\}$. If a set  $C\in \mathfrak{B}$ has $tr'_C=tr$, then
\begin{equation}
C\setminus \{\boldsymbol{t}_0\}\subseteq V_1.
\label{eq:track_1}
\end{equation}

Since $V_1$ has at most $2^{2r-1}$  different subsets, so there are at most $2^{2r-1}$ sets $C$ with property \eqref{eq:track_1}.
\end{proof}

\begin{lemma}
Suppose $\boldsymbol{t}_0\in\mathbb{Z}^\nu$. Then there are at most $(4\nu)^{2r-1}$ sets $C\in \mathfrak{B}$ with $\boldsymbol{t}_0\in C$.
\label{lemma:estim_2}
\end{lemma}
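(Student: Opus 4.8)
The plan is to set up a counting correspondence between the sets $C$ we wish to count and the tracks that begin at $\boldsymbol{t}_0$, and then to combine Lemma \ref{lemma:track_1} with the subset estimate underlying Lemma \ref{lemma:estim_1}. The target bound factors as $(4\nu)^{2r-1}=(2\nu)^{2r-1}\cdot 2^{2r-1}$, which already reveals the shape of the argument: the factor $(2\nu)^{2r-1}$ should count admissible tracks, and the factor $2^{2r-1}$ should count the sets $C$ assigned to a single such track. Concretely, I would establish that the number of $C\in\mathfrak{B}$ with $\boldsymbol{t}_0\in C$ is at most (the number of tracks of length $2r$ starting at $\boldsymbol{t}_0$) times (the maximal number of such $C$ sharing one track).

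First I would record two preliminary facts. Since $C\in\mathfrak{B}$ means $m=S(C)\leqslant r$, the extended track $tr'_C$ has exactly $2r$ edges, i.e. it has the form $(\boldsymbol{s}_0,\ldots,\boldsymbol{s}_{2r})$, and by Lemma \ref{lemma:track_2} it is a track. Moreover $\boldsymbol{t}_0\in C\subseteq V(G_C)$, and every vertex of $G_C$ occurs in the Eulerian path used to build $tr_C$, hence in $tr'_C$; so $\boldsymbol{t}_0$ appears somewhere in $tr'_C$. I would then define a map $C\mapsto\widetilde{tr}_C$ by cyclically rotating $tr'_C$ so that it begins at the first occurrence of $\boldsymbol{t}_0$. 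Because $tr'_C$ is closed, such a rotation is again a track of length $2r$, and it starts (and ends) at $\boldsymbol{t}_0$. By Lemma \ref{lemma:track_1} with $n=2r$ there are at most $(2\nu)^{2r-1}$ possible values of $\widetilde{tr}_C$.

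It remains to bound the fibre of this map. Fix a track $tr=(\boldsymbol{t}_0,\boldsymbol{u}_1,\ldots,\boldsymbol{u}_{2r-1},\boldsymbol{t}_0)$ and suppose $\widetilde{tr}_C=tr$. Then $tr'_C$ is a rotation of $tr$, so $C\subseteq V(G_C)\subseteq\{\boldsymbol{t}_0,\boldsymbol{u}_1,\ldots,\boldsymbol{u}_{2r-1}\}$; since $\boldsymbol{t}_0\in C$, the set $C$ is determined by $C\setminus\{\boldsymbol{t}_0\}$, which is a subset of $\{\boldsymbol{u}_1,\ldots,\boldsymbol{u}_{2r-1}\}$, a set of at most $2r-1$ points. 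This is exactly the subset count appearing in the proof of Lemma \ref{lemma:estim_1}, and it yields at most $2^{2r-1}$ sets $C$ in the fibre. Summing over all admissible tracks gives at most $(2\nu)^{2r-1}\cdot 2^{2r-1}=(4\nu)^{2r-1}$ sets, as required.

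I would flag the rotation step as the main obstacle. The associated, and hence extended, track is defined canonically as the lexicographically first Eulerian path of $G'_C$, so its starting vertex is fixed by that construction and is in general not $\boldsymbol{t}_0$. This is precisely why Lemma \ref{lemma:estim_1}, which requires $tr'_C=tr$ with $tr$ starting at $\boldsymbol{t}_0$, cannot be applied verbatim; rotating $tr'_C$ to begin at $\boldsymbol{t}_0$ repairs this, and the fibre estimate survives because rotation preserves the vertex set, which is all that the subset-counting argument actually uses. The only point demanding care is to make the map well defined (by choosing, say, the first occurrence of $\boldsymbol{t}_0$) and to verify that no spurious factor of $2r$ enters: it does not, because each fibre is bounded directly by $2^{2r-1}$ rather than by enumerating rotations.
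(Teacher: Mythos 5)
Your proof is correct and follows essentially the same route as the paper's: a correspondence between sets $C$ containing $\boldsymbol{t}_0$ and tracks of length $2r$ starting at $\boldsymbol{t}_0$, with Lemma \ref{lemma:track_1} bounding the number of such tracks by $(2\nu)^{2r-1}$ and the subset count behind Lemma \ref{lemma:estim_1} bounding each fibre by $2^{2r-1}$. The paper compresses your rotation step into the phrase ``since a track is a closed path, we can assume $tr'_C$ starts at $\boldsymbol{t}_0$''; your explicit construction of the rotated track and your direct re-derivation of the fibre bound (noting that rotation preserves the vertex set, which is all the subset argument uses) is a more careful rendering of the same argument, not a different one.
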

\begin{proof}
For each set $C\in \mathfrak{B}$ with $\boldsymbol{t}_0\in C$ we have an extended track $tr'_C$; since a track is a closed path, we can assume $tr'_C$ starts at $\boldsymbol{t}_0$. So there is a correspondence between sets $C\in \mathfrak{B}$ with $\boldsymbol{t}_0\in C$ and tracks of the form $(\boldsymbol{t}_0,\boldsymbol{t}_1,\ldots,\boldsymbol{t}_{2r})$. By Lemma  \ref{lemma:estim_1} at most $2^{2r-1}$ sets correspond to each track and by Lemma \ref{lemma:track_1} the total number of such tracks starting at $\boldsymbol{t}_0$ is not greater than $(2\nu)^{2r-1}$. Therefore the number of sets $C\in \mathfrak{B}$ with $\boldsymbol{t}_0\in C$ is not greater than $2^{2r-1}(2\nu)^{2r-1}=(4\nu)^{2r-1}$.
\end{proof}

\begin{definition}
\label{L}
1. For any $B\in \mathfrak{B}$ denote $l(B)$ the number of sets $C\in \mathfrak{B}$ that intersect with $B$.

2. Denote $L=max\{l(B)\mid B\in\mathfrak{B}\}$.
\end{definition}

Clearly, $L$ depends only on $\nu$ and $r$.

\begin{lemma}
In the interaction model the number $L$ is defined and 
\[L\leqslant(4\nu)^{2r-1}(r+1).\]
\label{L exists}
\end{lemma}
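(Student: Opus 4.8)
The plan is to bound $l(B)$ uniformly over all $B\in\mathfrak{B}$ by combining two ingredients: the bound of Lemma \ref{lemma:estim_2} on how many sets $C\in\mathfrak{B}$ can contain a fixed lattice point, and a bound on the number of points of $B$ itself. Once $l(B)$ is bounded by a constant depending only on $\nu$ and $r$, the set $\{l(B)\mid B\in\mathfrak{B}\}$ is a nonempty set of natural numbers that is bounded above, hence has a maximum; this simultaneously shows that $L$ is defined and yields the desired estimate.

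First I would bound the cardinality of an arbitrary $B\in\mathfrak{B}$. By definition $m=S(B)\leqslant r$, and by Lemma \ref{lemma:assoc_graph} the associated graph $G_B$ is a tree with $m$ edges and exactly $m+1$ vertices, with $B$ contained in its vertex set. Hence
\[
|B|\leqslant m+1\leqslant r+1.
\]

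Next I would apply a union bound. By Definition \ref{L}, $l(B)$ counts the sets $C\in\mathfrak{B}$ with $C\cap B\neq\varnothing$. Every such $C$ contains at least one point $\boldsymbol{t}\in B$, so the family of sets intersecting $B$ is the union, over $\boldsymbol{t}\in B$, of the families $\{C\in\mathfrak{B}\mid \boldsymbol{t}\in C\}$; therefore
\[
l(B)\leqslant\sum_{\boldsymbol{t}\in B}\#\{C\in\mathfrak{B}\mid \boldsymbol{t}\in C\}.
\]
By Lemma \ref{lemma:estim_2} each summand is at most $(4\nu)^{2r-1}$, and by the previous step there are at most $r+1$ summands, giving $l(B)\leqslant(r+1)(4\nu)^{2r-1}$. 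Since this bound is independent of $B$, the maximum $L$ exists and satisfies the claimed inequality.

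The content here is light, since the two supporting lemmas do the real work; the only point needing genuine care is the cardinality bound $|B|\leqslant r+1$, which is exactly where the tree structure of $G_B$ from Lemma \ref{lemma:assoc_graph} is essential: without the observation that a minimal $1$-connected graph is a tree with $m+1$ vertices, a bound on $|B|$ in terms of $S(B)$ would be unavailable. The union bound itself is routine but hinges on the fact that intersecting $B$ forces containment of one of its at most $r+1$ points.
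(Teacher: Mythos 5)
Your proof is correct and follows essentially the same route as the paper's: bound $|B|\leqslant r+1$ via $S(B)\leqslant r$, then apply a union bound over the points of $B$ using Lemma \ref{lemma:estim_2}. Your argument is in fact slightly more careful than the paper's, since you explicitly justify $|B|\leqslant r+1$ through the tree structure of $G_B$ (Lemma \ref{lemma:assoc_graph}) and explicitly note that a uniformly bounded nonempty set of natural numbers has a maximum, both of which the paper leaves implicit.
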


\begin{proof}
Fix an arbitrary $B\in \mathfrak{B}$. It is sufficient to show:
\[l(B)\leqslant(4\nu)^{2r-1}(r+1).\]

Since $S(B)\leqslant r$, we have $|B|\leqslant r+1$. Any set $C\in \mathfrak{B}$ that intersects with $B$ contains one of the $(r+1)$ elements of $B$. By Lemma \ref{lemma:estim_2},
for each element $\boldsymbol{t}_0\in B$ there are at most $(4\nu)^{2r-1}$ sets $C\in \mathfrak{B}$ with $\boldsymbol{t}_0\in C$. So the total number of sets $C\in \mathfrak{B}$ that intersect with $B$ is not greater than $(4\nu)^{2r-1}(r+1)$.
\end{proof} 
 
\textit{Remarks}. 
1) $L$ is a characteristic of interaction in the physical system.

2) We did not aim to obtain the most accurate estimate of $L$. However, it is not hard to get more accurate estimates in particular cases. For example, in the Ising, Potts and $n$-vector models $L=4\nu-1$ because in these models only neighboring points in $\mathbb{Z}^\nu$ interact, i.e. each set in $\mathfrak{B}$ is a set of two neighboring points.  

\subsection{Convergence Theorem}

\begin{definition}
Define $\lambda_0=\dfrac{1}{50L(8\nu)^{2r}}$, where $L$ was introduced in Definition \ref{L}.2.
\label{def:lambda_0}
\end{definition}

Clearly, $\lambda_0$ depends only on $\nu$ and $r$.

\begin{notation}
For any $N\in\mathbb{N}$ denote a cube in $\mathbb{Z}^\nu$:
\[\Lambda_N=\{\boldsymbol{t}=(t_1,\ldots,t_\nu)\in\mathbb{Z}^\nu\,\Big|\, |t_i|\leqslant N\text{ for each } i=1,\ldots,\nu\}.\]
\end{notation}

\begin{notation}
\begin{enumerate}[1)]
Suppose $Q$ is a finite subset of $\mathbb{Z}^\nu$ and its size $q=S(Q)$.

\item Denote $\mathbb{N}_Q=\{N\in\mathbb{N}\mid Q\subseteq \Lambda_N\}$.
For any $N\in\mathbb{N}_Q$:
\medskip
\begin{itemize}
\item $(\Lambda_N, U_{\Lambda_N},P_{\Lambda_N})$ is the finite interaction model with characteristics $\lambda$, $r$, $\Phi$, $\{P_{\boldsymbol{t}}\mid \boldsymbol{t}\in\mathbb{Z}^\nu\}$ from Definition  \ref{def:finite_model};

\item for brevity denote $U_N=U_{\Lambda_N}$ and $P_N=P_{\Lambda_N}$.
\end{itemize}
\medskip
\item For any $A\in\Sigma_Q$ define:
\begin{equation}
P_{\lambda,Q}(A)=\lim_{N\rightarrow\infty}P_N(A).
\label{eq:lim}
\end{equation}
\end{enumerate}
\end{notation}

\begin{theorem} [Convergence Theorem]
Suppose $0\leqslant\lambda\leqslant\lambda_0$. Then 
the following hold.
\begin{enumerate}[1)]
\item For any finite set $Q\subset\mathbb{Z}^\nu$ and any $A\in\Sigma_Q$ the limit in \eqref{eq:lim} exists.

\item $P_{\lambda,Q}$ is a probability measure on $(\Omega,\Sigma_Q)$.
\end{enumerate}
\label{theorem:Gibbs_measure}
\end{theorem}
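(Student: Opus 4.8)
The plan is to follow the roadmap indicated in the introduction: first represent $P_N(A)$ as a series of joint semi-invariants, then prove that this series converges absolutely and uniformly in $N$ by dominating it with a geometric series whose ratio is controlled by $\lambda_0$, and finally pass to the limit term by term. For the representation I would start from Lemma \ref{lemma:modification} and the identity $\langle I_A\rangle_{P_N}=\frac{d}{ds}\big|_{s=0}\log\langle e^{sI_A+U_N}\rangle_0$, which is legitimate because $U_N=\sum_{B\subseteq\Lambda_N,\,B\in\mathfrak{B}}\Phi_B$ is bounded. Expanding the logarithm into joint semi-invariants of the summands $sI_A$ and $\Phi_B$ and collecting the terms linear in $s$ yields
\[
P_N(A)=\sum_{n\ge0}\frac{1}{n!}\sum_{\substack{B_1,\ldots,B_n\subseteq\Lambda_N\\ B_i\in\mathfrak{B}}}\langle I_A;\Phi_{B_1};\ldots;\Phi_{B_n}\rangle_0,
\]
where $\langle\,\cdot\,;\ldots;\,\cdot\,\rangle_0$ denotes the joint semi-invariant with respect to $P_0$. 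The structural fact that makes this useful is that, because the field $\{X_{\boldsymbol t}\}$ is independent under $P_0$ and $\Phi_B$ is $\Sigma_B$-measurable, such a semi-invariant vanishes unless the sets $Q,B_1,\ldots,B_n$ form a connected cluster (the graph whose nodes are these sets and whose edges join intersecting sets is connected). Thus only clusters anchored at $Q$ contribute.

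The \emph{main obstacle} is establishing the two estimates that force convergence. The first is analytic: since $|\Phi_B|\le\lambda$ and $|I_A|\le1$, the standard bound for semi-invariants of bounded random variables gives $|\langle I_A;\Phi_{B_1};\ldots;\Phi_{B_n}\rangle_0|\le c^{\,n}\lambda^n$ for an absolute constant $c$. The second is combinatorial: I would count the ordered $n$-tuples of sets from $\mathfrak{B}$ that form a connected cluster anchored at $Q$. This is exactly where the graph machinery of this section is used — the track counts of Lemmas \ref{lemma:track_1}, \ref{lemma:estim_1}, \ref{lemma:estim_2} bound the number of sets of $\mathfrak{B}$ through a fixed point by $(4\nu)^{2r-1}$, and Lemma \ref{L exists} bounds the branching of the cluster by $L$. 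Together with the anchor factor $q+1=|Q|$-type term, these give a count of order $n!\,K_0^{\,n}$, whose factorial cancels the $1/n!$ in the series. Consequently each order $n$ contributes at most $(q+1)(K\lambda)^n$, with $K$ depending only on $\nu,r$ and proportional to $L(8\nu)^{2r}$. The value $\lambda_0=\frac{1}{50L(8\nu)^{2r}}$ of Definition \ref{def:lambda_0} is calibrated precisely so that $K\lambda\le K\lambda_0<1$, whence the series is dominated, uniformly in $N$, by the convergent geometric series $(q+1)\sum_{n}(K\lambda)^n$.

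With this uniform majorant, part 1 follows quickly. A connected cluster anchored at $Q$ of order $n$ has diameter bounded in terms of $q$ and $n$ (each $B_i$ satisfies $S(B_i)\le r$, so has bounded diameter), so for every fixed tuple $(B_1,\ldots,B_n)$ the constraint $B_i\subseteq\Lambda_N$ holds for all sufficiently large $N$; hence every term stabilizes as $N\to\infty$. Dominated convergence for series (Tannery's theorem) then gives $\lim_N P_N(A)=P_{\lambda,Q}(A)$, where $P_{\lambda,Q}(A)$ is the same series summed over all anchored clusters with the restriction $B_i\subseteq\Lambda_N$ removed.

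For part 2, non-negativity and $P_{\lambda,Q}(\Omega)=\lim_N P_N(\Omega)=1$ pass to the limit from the probability measures $P_N$, and finite additivity is immediate since it holds for every $P_N$. The one delicate point, countable additivity, I would obtain by exhibiting $P_{\lambda,Q}$ as absolutely continuous with respect to the product measure $P_0^Q=\prod_{\boldsymbol t\in Q}P_{\boldsymbol t}$ on $(\Omega,\Sigma_Q)$: writing $P_N(A)=\int_A\rho_N\,dP_0^Q$ with $\rho_N(\omega_Q)=\langle e^{U_N}\mid\omega_Q\rangle_0/\langle e^{U_N}\rangle_0$ the conditional density, the same cluster estimates show that $\rho_N$ converges in $L^1(P_0^Q)$ to a limit density $\rho_Q\ge0$ with $\int\rho_Q\,dP_0^Q=1$, so that $P_{\lambda,Q}(A)=\int_A\rho_Q\,dP_0^Q$ is a genuine probability measure. (Alternatively, since the limit $\lim_N P_N(A)$ exists for every $A\in\Sigma_Q$, the Nikodym convergence theorem gives countable additivity of the setwise limit directly.)
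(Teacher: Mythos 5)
Your overall architecture (semi-invariant expansion, vanishing of non-connected clusters, counting clusters anchored at $Q$, term-by-term stabilization, Tannery's theorem) matches the paper's, but there is a genuine gap at the analytic core. The ``standard bound'' you invoke, $|\langle I_A;\Phi_{B_1};\ldots;\Phi_{B_n}\rangle_0|\leqslant c^n\lambda^n$ with an absolute constant $c$, is false: joint semi-invariants of bounded random variables grow \emph{factorially} in the order, not exponentially. Indeed, for any nondegenerate bounded random variable the cumulant generating function has a finite radius of convergence (an entire, zero-free moment generating function would force degeneracy), so its cumulants $\kappa_n$ are of size $n!\rho^{-n}$ along a subsequence; in the present setting, taking $B_1=\cdots=B_n=B$ with $B\cap Q\neq\varnothing$ makes $\langle I_A,\Phi_B,\ldots,\Phi_B\rangle_0$ an $(n+1)$-st order cumulant, which can be as large as $n!\,(c\lambda)^n$. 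With the correct size of these terms your accounting collapses: the number of anchored ordered $n$-tuples is indeed of order $n!\,K_0^n$, so your majorant becomes $\frac{1}{n!}\cdot n!K_0^n\cdot n!\,(c\lambda)^n=n!\,(cK_0\lambda)^n$, which diverges for every $\lambda>0$. The entire difficulty of the proof --- and what Section \ref{section:estimations} of the paper exists to handle --- is exactly the interplay you have skipped: tuples with many repetitions have factorially large semi-invariants but are correspondingly rare. The paper resolves this by grouping sequences into families (multisets) $\Gamma$: the semi-invariant bound of Lemmas \ref{lemma:estim Mal}, \ref{Es 1} and \ref{lemma:estim Mal 2} (Malyshev--Minlos estimate $C_f\prod_i(3v_i)$, the inequality $\sum_j n_j\ln(u_j/n_j)\leqslant n\ln L$, and Stirling's formula) has the form $P_0(A)\lambda^n(3e^2L)^n(n+1)\,\Gamma!$, the factor $\Gamma!$ is cancelled by the weight $1/\Gamma!$ each family carries (each family accounts for $n!/\Gamma!$ sequences, Lemma \ref{lemma:family}), and it is the number of \emph{families}, not sequences, that is bounded exponentially by $2^{2q}\left(2(8\nu)^{2r}\right)^n$ via the track construction of Lemma \ref{lemma:estim_Gam}. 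None of this can be replaced by the uniform bound you assert, and no choice of $\lambda_0$ rescues the argument as written.

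A secondary remark on part 2: your fallback via the Nikodym convergence theorem (a setwise limit of countably additive probability measures on a $\sigma$-algebra is countably additive) is correct and would indeed finish part 2 once part 1 is known for every $A\in\Sigma_Q$; it is, however, a much heavier tool than anything the paper uses, which instead repeats the uniform-majorant argument for the double series $\sum_i\sum_n J_{A_i}(N,n)$ and stays elementary. Your first suggestion --- $L^1$ convergence of conditional densities $\rho_N$ --- is only asserted (``the same cluster estimates show\ldots'') and would itself require the very estimates that are missing above, so it cannot be counted as a proof.
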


This theorem will be proven in Sections \ref{section:estimations} and \ref{section:main_proof}.

\begin{corollary}
Suppose $0\leqslant\lambda\leqslant\lambda_0$. There exists a unique probability measure $P_\lambda$ on $(\Omega,\Sigma)$ such that for any finite set $Q\subset\mathbb{Z}^\nu$ and any $A\in\Sigma_Q$:
\[P_\lambda(A)=P_{\lambda,Q}(A).\]
\label{cor:Kolmogorov}
\end{corollary}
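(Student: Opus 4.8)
The plan is to invoke the Kolmogorov extension theorem, exactly as was done in Definition~\ref{def:initial} to build $P_0$, with the finite-dimensional distributions now supplied by the measures $P_{\lambda,Q}$ produced in Theorem~\ref{theorem:Gibbs_measure}. The only substantive thing to verify is that the family $\{P_{\lambda,Q}\}$, indexed by finite subsets $Q\subset\mathbb{Z}^\nu$, is consistent: whenever $Q\subseteq Q'$ are finite, the restriction of $P_{\lambda,Q'}$ to the smaller $\sigma$-algebra $\Sigma_Q$ should coincide with $P_{\lambda,Q}$. Granting this, the theorem yields a unique $P_\lambda$ on $(\Omega,\Sigma)$ whose restriction to each $\Sigma_Q$ is $P_{\lambda,Q}$, which is precisely the assertion of the corollary.

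To establish consistency, first note that if $Q\subseteq Q'$ then every generating set of $\Sigma_Q$ is also a generating set of $\Sigma_{Q'}$, so $\Sigma_Q\subseteq\Sigma_{Q'}$. Now fix $A\in\Sigma_Q$; then $A\in\Sigma_{Q'}$ as well, and the defining formula \eqref{eq:lim} gives both $P_{\lambda,Q}(A)=\lim_{N\to\infty}P_N(A)$ and $P_{\lambda,Q'}(A)=\lim_{N\to\infty}P_N(A)$. The approximating quantities $P_N(A)$ do not depend on the auxiliary set ($Q$ or $Q'$); only the index set $\mathbb{N}_Q\supseteq\mathbb{N}_{Q'}$ along which the limit is taken does, and both of these are tails of $\mathbb{N}$. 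Since Theorem~\ref{theorem:Gibbs_measure} guarantees the limit exists, both expressions denote the same number, so $P_{\lambda,Q'}(A)=P_{\lambda,Q}(A)$ for every $A\in\Sigma_Q$, which is exactly the required compatibility of marginals.

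With consistency in hand, apply the Kolmogorov extension theorem. Each coordinate space is $(\Omega_{\boldsymbol{t}},\Sigma_{\boldsymbol{t}})=(\mathbb{R},\text{Borel})$, which is standard, so the hypotheses are met and the theorem produces a probability measure $P_\lambda$ on $(\Omega,\Sigma)$ that agrees with $P_{\lambda,Q}$ on $\Sigma_Q$ for every finite $Q$. For uniqueness, observe that $\mathcal{A}=\bigcup_{Q}\Sigma_Q$, the union over all finite $Q\subset\mathbb{Z}^\nu$, is an algebra generating $\Sigma$; any admissible $P_\lambda$ is forced to equal $P_{\lambda,Q}$ on each $\Sigma_Q$, hence is determined on all of $\mathcal{A}$, and two probability measures agreeing on the generating algebra $\mathcal{A}$ agree on $\Sigma$ by the $\pi$--$\lambda$ theorem.

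I expect essentially no serious obstacle here: the content of the corollary resides entirely in Theorem~\ref{theorem:Gibbs_measure}, and what remains is the standard extension-and-gluing step. The one point demanding care is the consistency check above, specifically confirming that the limits defining $P_{\lambda,Q}$ and $P_{\lambda,Q'}$ on a common event are literally the \emph{same} limit rather than two limits that merely happen to coincide, which is why I emphasize that $P_N(A)$ is independent of the auxiliary finite set.
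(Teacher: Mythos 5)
Your proof is correct and follows essentially the same route as the paper: both reduce the corollary to the Kolmogorov extension theorem applied to the family $\{P_{\lambda,Q}\}$ of finite-dimensional measures furnished by the Convergence Theorem. The only difference is one of presentation — the paper phrases consistency as the two classical conditions (permutation invariance and marginalization) for tuple-indexed distributions and asserts them without proof, whereas you phrase it as restriction-compatibility for nested finite sets $Q\subseteq Q'$ and actually verify it, via the observation that $P_N(A)$ does not depend on the auxiliary set and that the index sets $\mathbb{N}_Q\supseteq\mathbb{N}_{Q'}$ are both tails of $\mathbb{N}$; this is the same idea, written out more completely.
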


\begin{proof}
For any integer $k>1$, distinct $\boldsymbol{t}_1,\ldots,\boldsymbol{t}_k\in\mathbb{Z}^\nu$ and $F_i\in\Sigma_{\boldsymbol{t}_i}$ $(i=1,\ldots,k)$ denote 
\[P_{\lambda,\boldsymbol{t}_1\ldots \boldsymbol{t}_k}(F_{1}\times\ldots\times F_{k})=P_{\lambda,\{\boldsymbol{t}_1,\ldots ,\boldsymbol{t}_k\}}(\mathcal{F}_1\cap\ldots\cap\mathcal{F}_k),\]
where $\mathcal{F}_i=\{\omega\in\Omega\mid\omega(\boldsymbol{t}_i)\in F_i\}$, $i=1,\ldots,k$.
\medskip
\\
By Theorem \ref{theorem:Gibbs_measure}.2), each $P_{\lambda,\{\boldsymbol{t}_1,\ldots ,\boldsymbol{t}_k\}}$ is a probability measure on $(\Omega,\Sigma_{\{\boldsymbol{t}_1,\ldots ,\boldsymbol{t}_k\}})$. Then each $P_{\lambda,\boldsymbol{t}_1\ldots \boldsymbol{t}_k}$ is a probability measure on $(\Omega_{\boldsymbol{t}_1}\times\ldots\times\Omega_{\boldsymbol{t}_k},\Sigma_{\boldsymbol{t}_1\ldots\boldsymbol{t}_k} )$ that was introduced in Definition \ref{def:initial}.1).
These probabilities satisfy the following two consistency conditions.

\begin{enumerate}[1)]
\item For any distinct $\boldsymbol{t}_1,\ldots,\boldsymbol{t}_k\in\mathbb{Z}^\nu$, permutation  
$\pi$ of $\{1,2,\ldots,k\}$, and $F_i\in\Sigma_{\boldsymbol{t}_i}$ $(i=1,\ldots,k)$:
\[P_{\lambda,\boldsymbol{t}_{\pi(1)}\ldots \boldsymbol{t}_{\pi(k)}} (F_{\pi (1)}\times \ldots \times F_{\pi (k)}) 
=P_{\lambda,\boldsymbol{t}_1\ldots \boldsymbol{t}_k}(F_{1}\times\ldots\times F_{k}).\]

\item For any distinct $\boldsymbol{t}_1,\ldots,\boldsymbol{t}_k,\boldsymbol{t}_{k+1}\in\mathbb{Z}^\nu$ and $F_i\in\Sigma_{\boldsymbol{t}_i}$ $(i=1,\ldots,k)$:
\[P_{\lambda,\boldsymbol{t}_{1}\dots \boldsymbol{t}_{k}}(F_{1}\times\dots\times F_{k})
=P_{\lambda,\boldsymbol{t}_{1}\dots \boldsymbol{t}_{k}\boldsymbol{t}_{k+1}}\big(F_{1}\times \dots \times F_{k}\times\mathbb{R}\big).\]
\end{enumerate}

Hence the corollary follows from the Kolmogorov extension theorem.
\end{proof}

The probability measure $P_{\lambda}$ is called the \textbf{Gibbs measure}.

\subsection{Infinite Interaction Model}
\begin{definition}
Suppose $0\leqslant\lambda\leqslant\lambda_0$. 
A \textbf{infinite interaction model} with characteristics $\lambda,r,\Phi$, $\{P_{\boldsymbol{t}}\mid \boldsymbol{t}\in\mathbb{Z}^\nu\}$ is the probability space
$(\Omega,\Sigma,P_{\lambda}),$
where $P_{\lambda}$ is the probability measure from Corollary \ref{cor:Kolmogorov}.

The probability space $(\Omega,\Sigma,P_{\lambda})$ is also called the \textbf{thermodynamic limit} or \textbf{macroscopic limit} of the finite interaction models $(\Lambda_N, U_N,P_N)$ as $N\rightarrow\infty$.
\label{def:infinite_model} 
\end{definition}

Next we show that several well-known models can be considered as particular cases of the interaction model.

\subsection{Potts model as a particular case of interaction model}

The standard Potts model is based on the interaction energy (also called Hamiltonian):
\[U_\Lambda =\sum\limits_{\substack{\boldsymbol{s},\boldsymbol{t}\in \Lambda\\\|\boldsymbol{s}-\boldsymbol{t}\|=1}}J_{\boldsymbol{s}\boldsymbol{t}}\delta(X_{\boldsymbol{s}},
X_{\boldsymbol{t}}),\]
where $J_{\boldsymbol{s}\boldsymbol{t}}$ are coupling constants and the random variables $X_{\boldsymbol{t}}$ (interpreted as site colors) take values $1,2,\ldots, q$ with equal probabilities;
\[\delta(x,y)=\left\lbrace
\begin{array}{cc}
1\text{ if }x=y,
\\
0\text{ if }x\neq y.
\end{array}
\right.\]

Applying Definition \ref{def:parameters} we take an arbitrary $\nu\geqslant1$, an arbitrary $\lambda\geqslant0$, $r=1$, and the probabilities $\{P_{\boldsymbol{t}}\mid \boldsymbol{t}\in\mathbb{Z}^\nu\}$ such that for any Borel set $A$:
\[P_{\boldsymbol{t}}(A)=\dfrac{|A\cap\{1,2,\ldots, q\}|}{q},\]
where the numerator is the cardinality of the set $A\cap\{1,2,\ldots, q\}$.

Since $r=1$, then 
$\mathfrak{B}=\{ \{\boldsymbol{s},\boldsymbol{t}\}\subset\mathbb{Z}^\nu \;|\; \|\boldsymbol{s}-\boldsymbol{t}\|=1\}$ and
for any $B\in\mathfrak{B}$ with $B=\{\boldsymbol{s},\boldsymbol{t}\}$:
\[\Phi_B(\omega)=\lambda K_{\boldsymbol{s}\boldsymbol{t}}\delta(\omega
(\boldsymbol{s}),
\omega(\boldsymbol{t})),\]
where $|K_{\boldsymbol{s}\boldsymbol{t}}|\leqslant1$.

When $0\leqslant\lambda\leqslant\lambda_0$ and $N\rightarrow\infty$ (i.e. $\Lambda\rightarrow\mathbb{Z}^\nu$), the Potts model becomes a particular case of the infinite interaction model.

\subsection{Ising model as a particular case of interaction model}
The Ising model is equivalent to the Potts model for $q=2$.
The Ising model is based on the interaction energy:
\[U_\Lambda =\sum\limits_{\substack{\boldsymbol{s},\boldsymbol{t}\in \Lambda\\\|\boldsymbol{s}-\boldsymbol{t}\|=1}}J_{\boldsymbol{s}\boldsymbol{t}}X_{\boldsymbol{s}}
X_{\boldsymbol{t}}
-\sum\limits_{\boldsymbol{t}\in \Lambda} h_{\boldsymbol{t}} X_{\boldsymbol{t}},\]
where $J_{\boldsymbol{s}\boldsymbol{t}}, h_{\boldsymbol{t}}$ are  constants and the random variables $X_{\boldsymbol{t}}$  take values $\pm 1$ with equal probabilities.

To eliminate the second sum in $U_{\Lambda}$ (representing an external magnetic field) we apply a single Gibbs modification described in \cite{K17}, pg.343-344, and get the probabilities $\{P_{\boldsymbol{t}}\mid \boldsymbol{t}\in\mathbb{Z}^\nu\}$ such that for any Borel set $A$:
\begin{equation}
P_{\boldsymbol{t}}(A)=
\dfrac{1}{e^{h_{\boldsymbol{t}}}+e^{-h_{\boldsymbol{t}}}}
\sum\limits_{\substack{x\in A\\x=\pm1}}e^{-xh_{\boldsymbol{t}}}.
\label{eq:Borel}
\end{equation}

Applying Definition \ref{def:parameters} we take an arbitrary $\nu\geqslant1$, an arbitrary $\lambda\geqslant0$, the probabilities $\{P_{\boldsymbol{t}}\mid \boldsymbol{t}\in\mathbb{Z}^\nu\}$ given by \eqref{eq:Borel}, and $r=1$. 

Then 
$\mathfrak{B}=\{ \{\boldsymbol{s},\boldsymbol{t}\}\subset\mathbb{Z}^\nu \;|\; \|\boldsymbol{s}-\boldsymbol{t}\|=1\}$ and
for any $B\in\mathfrak{B}$ with $B=\{\boldsymbol{s},\boldsymbol{t}\}$:
\[\Phi_B(\omega)=\lambda K_{\boldsymbol{s}\boldsymbol{t}}\omega
(\boldsymbol{s})
\omega(\boldsymbol{t}),\]
where $|K_{\boldsymbol{s}\boldsymbol{t}}|\leqslant1$.

When $0\leqslant\lambda\leqslant\lambda_0$ and $N\rightarrow\infty$ (i.e. $\Lambda\rightarrow\mathbb{Z}^\nu$), the Ising model becomes a particular case of the infinite interaction model.

\subsection{n-vector model}
In the $n$-vector model classical spins $\mathbf{s}_i$ ($i\in \mathbb{Z}^\nu$) are  $n$-dimensional vectors with unit length; the interaction energy of the $n$-vector model is given by:
 $$U_{\Lambda}=-J\sum\limits_{\substack{\boldsymbol{i},\boldsymbol{j}\in \Lambda\\\|\boldsymbol{i}-\boldsymbol{j}\|=1}}\mathbf{s}_{\boldsymbol{i}} \cdot\mathbf{s}_{\boldsymbol{j}},$$
where $\cdot$ denotes scalar product.

Our interaction model can be easily generalised to the case when $X_{\boldsymbol{t}}$ has values in $\mathbb{R}^n$: in Definition \ref{def:P_t} we take $\Omega_{\boldsymbol{t}}=\mathbb{R}^n$ and 
$\Sigma_{\boldsymbol{t}}$ as the Borel $\sigma$-algebra on $\mathbb{R}^n$. The construction of Definition \ref{def:initial} still applies producing $\Omega=\left\lbrace 
\omega\mid\omega:\mathbb{Z}^\nu\rightarrow \mathbb{R}^n\right\rbrace$. It can be shown that the $n$-vector model is a particular case of this generalised interaction model, similarly to the process in the previous two subsections.

Particular cases of the $n$-vector model are the Ising model ($n=1$), XY model ($n=2$), and Heisenberg model ($n=3$).

\section{Technical Definitions and Estimations}
\label{section:estimations}
The rest of the paper is devoted to the proof of the Convergence Theorem (Theorem \ref{theorem:Gibbs_measure}). 
For the rest of the paper we fix $\Omega,\Sigma$,  characteristics $\lambda,r,\Phi$, $\{P_{\boldsymbol{t}}\mid \boldsymbol{t}\in\mathbb{Z}^\nu\}$, a finite set $Q\subset\mathbb{Z}^\nu$ with size $q=S(Q)$, and an event $A\in\Sigma_Q$. We will consider only integers $N\in\mathbb{N}_Q$.

Section \ref{section:estimations} contains some technical concepts and lemmas, which lead to the proof of the Convergence Theorem in Section \ref{section:main_proof}.

\subsection{Semi-Invariants}
 Denote $\langle\cdot,\ldots,\cdot\rangle_0$ a semi-invariant with respect to the probability measure $P_0$. The definition and properties of semi-invariants are described in literature, see for example \cite{Ma80} and references in it. 
 
\begin{notation}
For a random variable $Y$ on $(\Omega,\Sigma)$ and a sequence $\gamma=(B_1,\ldots,$ $B_n)$ of elements of  $\mathfrak{B}$ denote 
\[\langle Y,\Phi_{\gamma}\rangle_0=\langle Y,\Phi_{B_1},\Phi_{B_2},\ldots,\Phi_{B_n}\rangle_0.\]
If $n=0$, then $\langle Y,\Phi_{B_1},\ldots,\Phi_{B_n}\rangle_0=\langle Y\rangle_0$.
\end{notation}

\begin{definition}
A sequence $\gamma=(B_0,B_1, ...,B_n)$ of subsets of $\mathbb{Z}^\nu$ is called \textbf{connected} if the following graph is connected:
\begin{enumerate}[a)]
\item 
its set of \textbf{vertices} is $\{0,1,\ldots,n\}$; 
\item 
a pair of vertices $i,j$ is connected with an \textbf{edge} if and only if $B_i\cap B_j\neq\varnothing$. 
\end{enumerate} 
\end{definition}

\begin{lemma}
Suppose $B_1,\ldots,B_n\in\mathfrak{B}$ and the sequence $(Q,B_1, ...,B_n)$ is not connected. Then
\[\langle I_A,\Phi_{B_1},\ldots,\Phi_{B_n}\rangle_0=0.\]
\label{lemma:not_connected}
\end{lemma}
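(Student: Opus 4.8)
The plan is to exploit the key vanishing property of semi-invariants (joint cumulants): if a finite family of random variables splits into two non-empty sub-families that are mutually independent, then their joint semi-invariant is zero. Hence it suffices to produce such a splitting of the family $I_A,\Phi_{B_1},\ldots,\Phi_{B_n}$ directly from the hypothesis that the graph on the vertex set $\{0,1,\ldots,n\}$ (with vertex $0$ standing for $Q$ and vertex $i$ for $B_i$) is disconnected.

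First I would fix the connected component $S$ of this graph that contains the vertex $0$. Since the graph is disconnected, the complement $T=\{0,1,\ldots,n\}\setminus S$ is non-empty, and by the definition of a connected component there are no edges between $S$ and $T$. Setting $W_S=Q\cup\bigcup_{i\in S\setminus\{0\}}B_i$ and $W_T=\bigcup_{j\in T}B_j$, the absence of cross-edges is exactly equivalent to $W_S\cap W_T=\varnothing$: any common point would belong both to a set indexed by $S$ and to a set indexed by $T$, which by the definition of the edge relation would force an edge between the two groups, a contradiction.

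Next I would convert disjointness of supports into independence. Each $\Phi_{B_i}$ is $\Sigma_{B_i}$-measurable and $I_A$ is $\Sigma_Q$-measurable, so every variable indexed by $S$ is $\Sigma_{W_S}$-measurable and every variable indexed by $T$ is $\Sigma_{W_T}$-measurable. Because $\{X_{\boldsymbol{t}}\mid\boldsymbol{t}\in\mathbb{Z}^\nu\}$ is an independent random field under $P_0$ and $W_S\cap W_T=\varnothing$, the $\sigma$-algebras $\Sigma_{W_S}$ and $\Sigma_{W_T}$ are independent; consequently the sub-family $\{I_A\}\cup\{\Phi_{B_i}\mid i\in S\setminus\{0\}\}$ is independent of the sub-family $\{\Phi_{B_j}\mid j\in T\}$. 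Both sub-families are non-empty, since the first contains $I_A$ and the second is non-empty because $T\neq\varnothing$.

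Finally I would invoke the semi-invariant vanishing property to conclude $\langle I_A,\Phi_{B_1},\ldots,\Phi_{B_n}\rangle_0=0$. The step requiring the most care is the combinatorial bookkeeping of the first two paragraphs, namely correctly choosing the separation of the graph and verifying that it yields disjoint coordinate supports and hence independence; the concluding vanishing step is a standard property of semi-invariants (see the references on semi-invariants cited above), which follows from the factorization of the joint moment generating function over independent blocks, whereby the mixed highest-order derivative defining the joint semi-invariant annihilates a sum of two functions each omitting some of the variables.
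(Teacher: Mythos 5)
Your proposal is correct. It is worth noting, though, that the paper does not actually argue this lemma at all: its entire proof is a citation to property C on page 29 of the Malyshev--Minlos book, which is precisely the vanishing property of semi-invariants you invoke at the end. So the mathematical content is the same, but you have supplied what the paper outsources: (a) the reduction of ``$(Q,B_1,\ldots,B_n)$ is not connected'' to a splitting of the index set into the component $S$ of the vertex $0$ and its non-empty complement $T$, with no cross-edges; (b) the observation that no cross-edges means the lattice supports $W_S$ and $W_T$ are disjoint, so that $\Sigma_{W_S}$ and $\Sigma_{W_T}$ are independent under the product measure $P_0$ (here you correctly use that $A\in\Sigma_Q$ and each $\Phi_{B_i}$ is $\Sigma_{B_i}$-measurable); and (c) the standard cumulant argument that the logarithm of the joint moment generating function splits into a sum of two terms, each missing at least one variable, so the full mixed derivative vanishes --- legitimate here because $I_A$ and the $\Phi_{B_i}$ are bounded, hence the generating function is finite and smooth near the origin. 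What the citation buys the paper is brevity; what your argument buys is self-containedness, which is in the spirit of the paper's stated goal of an elementary, transparent proof. The only point to keep in view is that step (b) quietly relies on the fact that $\sigma$-algebras generated by disjoint groups of independent coordinates are independent (a $\pi$--$\lambda$ argument), which is standard but is the one place where something beyond pure combinatorics is being used.
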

\begin{proof}
Proof can be found in \cite{MM91}, page 29 (property C). 
\end{proof}

\subsection{Families}

\begin{definition}
\label{multi}
\begin{enumerate}[1)]
\item A \textbf{family} (of elements of $\mathfrak{B}$) is a set of pairs \\$\Gamma=\{(C_1,n_1), \ldots,(C_k,n_k)\}$, where $C_1,\ldots,C_k$ are distinct elements of $ \mathfrak{B}$ and $n_i\geqslant 1$ for each $i=1,\ldots,k$. 

\item The number $n_i$ is called the \textbf{multiplicity} of element $C_i$ in the family $\Gamma$.

\item We denote the \textbf{length} of the family $\Gamma$ as 
$|\Gamma|=n_1+n_2+\ldots+n_k$ and \[\Gamma!=n_1!\cdot n_2!\cdot\ldots\cdot n_k! \]

\item $u_j(\Gamma)=\sum\limits_{j:C_i\cap C_j\neq\varnothing}n_j.$ 
Clearly, $u_j(\Gamma)\geqslant n_j$. 
\medskip

\item For a random variable $Y$ on $(\Omega,\Sigma)$ denote 
\[\langle Y,\Phi_{\Gamma}\rangle_0=\langle Y,\underbrace{\Phi_{C_1},\ldots,\Phi_{C_1}}_{n_1\,\text{times}},\ldots,\underbrace{\Phi_{C_k},\ldots,\Phi_{C_k}}_{n_k\,\text{times}}\rangle_0.\]

\item The family $\Gamma=\{(C_1,n_1), \ldots,(C_k,n_k)\}$ is called \textbf{Q-connected} if the sequence $(Q,C_1, \ldots,C_k)$ is connected.
\end{enumerate} 
\end{definition} 

\begin{definition}
We say that a sequence $\gamma=(B_1,\ldots,B_n)$ \textbf{reduces} to a family $\{(C_1,n_1), \ldots,(C_k,n_k)\},$
if $C_1,\ldots,C_k$ are the elements 
$B_1,\ldots,B_n$ written without repetitions, and each $n_i$ is the number of times that $C_i$ is repeated in $\gamma$.

Clearly, $n_1+\ldots+n_k=n$.
\end{definition}

\begin{lemma}
For each family $\Gamma$ of length $n$ there are $\dfrac{n!}{\Gamma !}$ sequences that reduce to $\Gamma$.
\label{lemma:family}
\end{lemma}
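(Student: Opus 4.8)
The plan is to recognize that a sequence $\gamma=(B_1,\ldots,B_n)$ reducing to $\Gamma=\{(C_1,n_1),\ldots,(C_k,n_k)\}$ is precisely an ordering of the multiset in which each $C_i$ occurs exactly $n_i$ times, and then to count these orderings by a standard overcounting argument. Since $C_1,\ldots,C_k$ are distinct and $n_1+\cdots+n_k=n$, the reducing sequences are exactly the arrangements of $n$ symbols of $k$ types with the prescribed multiplicities, so the claim is the usual multinomial count.

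First I would temporarily make the $n$ entries distinguishable by attaching labels: replace the $n_i$ copies of each $C_i$ by distinct symbols $C_i^{(1)},\ldots,C_i^{(n_i)}$. There are exactly $n!$ orderings of these $n$ now-distinct symbols, since the arrangements of $n$ distinct objects into $n$ positions are counted by $n!$. Next I would consider the map that sends each such labeled ordering to the unlabeled sequence obtained by forgetting the superscripts; its image is exactly the set of sequences reducing to $\Gamma$. The main point requiring care is to verify that every fiber of this map has size exactly $\Gamma!=n_1!\cdots n_k!$: two labeled orderings collapse to the same unlabeled sequence if and only if one is obtained from the other by permuting, for each $i$ independently, the $n_i$ labels $C_i^{(1)},\ldots,C_i^{(n_i)}$ among the positions already occupied by copies of $C_i$. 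There are $n_1!\cdots n_k!$ such relabelings, and because the positions occupied by each $C_i$ are determined by the unlabeled sequence, distinct relabelings give distinct labeled orderings; hence each fiber has cardinality exactly $\Gamma!$.

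Finally, since the $n!$ labeled orderings partition into fibers each of size $\Gamma!$, the number of unlabeled sequences reducing to $\Gamma$ equals $n!/\Gamma!$, as claimed. I do not anticipate a genuine obstacle: the statement is the standard multinomial identity, and the only step deserving explicit justification is the fiber-size computation, namely that relabeling the identical copies of each $C_i$ accounts for precisely the factor $\Gamma!$. As an alternative one could argue directly by choosing positions, selecting the $n_1$ slots for $C_1$ in $\binom{n}{n_1}$ ways, then the $n_2$ slots for $C_2$ among the remaining in $\binom{n-n_1}{n_2}$ ways, and so on; the resulting telescoping product collapses to $n!/(n_1!\cdots n_k!)=n!/\Gamma!$, giving the same count.
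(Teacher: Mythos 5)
Your proof is correct: it is the standard multinomial count (labeling the repeated copies, computing fibers of size $\Gamma! = n_1!\cdots n_k!$, and dividing $n!$ by $\Gamma!$), which is exactly the "combinatorics" the paper invokes without detail, since its entire proof reads "follows from the definitions and combinatorics." Your write-up simply makes explicit the argument the paper treats as standard.
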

\begin{proof}
The lemma follows from the definitions and combinatorics.
\end{proof} 

\begin{lemma}
If a sequence $\gamma$ reduces to a family $\Gamma$, then
\[\langle Y,\Phi_{\gamma}\rangle_0=\langle Y,\Phi_{\Gamma}\rangle_0.\]
\label{lemma:seq_family}
\end{lemma}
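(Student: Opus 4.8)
The plan is to show that the semi-invariant $\langle Y,\Phi_\gamma\rangle_0$ depends only on the \emph{unordered multiset} of variables $\Phi_{B_1},\ldots,\Phi_{B_n}$, not on the order in which they are listed, and then observe that every sequence reducing to $\Gamma$ produces exactly the multiset encoded by $\Gamma$. This reduces the lemma to a single structural fact about semi-invariants, namely their invariance under permutation of arguments (apart from the first argument $Y$, which is held fixed in all our notation). I would first recall the definition of the semi-invariant (joint cumulant) $\langle Z_0,Z_1,\ldots,Z_n\rangle_0$ with respect to $P_0$, for instance via the expansion of $\log\langle e^{\sum_j s_j Z_j}\rangle_0$ as a power series in the formal variables $s_0,\ldots,s_n$, where the semi-invariant is the coefficient (up to the obvious factorials) of $s_0 s_1\cdots s_n$.

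The key step is symmetry. From the generating-function description, $\langle Z_0,Z_1,\ldots,Z_n\rangle_0$ is read off from the coefficient of the monomial $s_0 s_1\cdots s_n$ in a function that is manifestly symmetric in the pairs $(s_j,Z_j)$: relabelling the indices simply permutes the formal variables $s_j$, and the coefficient of a product of distinct first-degree monomials is unchanged by such a relabelling. Hence for any permutation $\sigma$ of $\{1,\ldots,n\}$,
\[
\langle Z_0,Z_{\sigma(1)},\ldots,Z_{\sigma(n)}\rangle_0=\langle Z_0,Z_1,\ldots,Z_n\rangle_0.
\]
Applying this with $Z_0=Y$ and $(Z_1,\ldots,Z_n)=(\Phi_{B_1},\ldots,\Phi_{B_n})$ shows that $\langle Y,\Phi_\gamma\rangle_0$ is unaffected by permuting the entries of $\gamma$.

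To finish, suppose $\gamma=(B_1,\ldots,B_n)$ reduces to $\Gamma=\{(C_1,n_1),\ldots,(C_k,n_k)\}$. By definition each $C_i$ occurs exactly $n_i$ times among $B_1,\ldots,B_n$, so there is a permutation $\sigma$ of $\{1,\ldots,n\}$ rearranging $\gamma$ into the canonical block form in which $C_1$ appears $n_1$ times, then $C_2$ appears $n_2$ times, and so on. That canonical sequence is precisely the argument list defining $\langle Y,\Phi_\Gamma\rangle_0$ in Definition~\ref{multi}.5). By the permutation invariance just established, $\langle Y,\Phi_\gamma\rangle_0$ equals the semi-invariant of the canonical sequence, which is $\langle Y,\Phi_\Gamma\rangle_0$, as claimed.

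I expect the only genuine obstacle to be pinning down the permutation-symmetry of semi-invariants cleanly; since this is a standard property of joint cumulants (and is among the properties catalogued in the references \cite{Ma80} and \cite{MM91}), I would cite it rather than rederive the cumulant expansion from scratch, keeping the proof as short as the companion lemmas (Lemma~\ref{lemma:family}, Lemma~\ref{lemma:not_connected}) whose proofs are likewise deferred to the literature or to the definitions.
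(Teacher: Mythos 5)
Your proposal is correct and rests on exactly the same idea as the paper's own (one-line) proof: semi-invariants are invariant under permutation of their arguments, so any sequence reducing to $\Gamma$ yields the same value as the canonical block ordering defining $\langle Y,\Phi_{\Gamma}\rangle_0$. You simply spell out the generating-function justification of that symmetry, which the paper takes as known.
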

\begin{proof}
The lemma follows from the fact that the value of a semi-invariant does not  does depend on the order of random variables. 
\end{proof}

In the following two subsections we produce some estimates, which will be used to prove convergence in Section \ref{section:main_proof}. We are not looking for most accurate estimates, since we are not interested in the speed of the convergence. 
 
\subsection{Estimating the Number of Families}

\begin{lemma}
Fix a natural number $n>1$.
The number of all $Q$-connected 
\medskip
\\
families $\Gamma$ of length $n$ is less than $2^{2q}\left(2(8\nu)^{2r}\right)^n$.
\label{lemma:estim_Gam}
\end{lemma}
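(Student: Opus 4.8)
The plan is to count families by separating the choice of the \emph{support} --- the distinct sets $C_1,\dots,C_k$ appearing in $\Gamma$, where $1\le k\le n$ --- from the choice of the multiplicities. Once a $Q$-connected support of size $k$ is fixed, a family of length $n$ on it is just an assignment of multiplicities $n_1,\dots,n_k\ge 1$ with $n_1+\dots+n_k=n$, and there are exactly $\binom{n-1}{k-1}$ of these. Hence, writing $T_k$ for the number of $Q$-connected supports of size $k$, the quantity to estimate is $\sum_{k=1}^{n}T_k\binom{n-1}{k-1}$, and the whole lemma reduces to the single sub-claim
\[
T_k\ \le\ 2^{2q}\,(8\nu)^{2rk}.
\]
Indeed, substituting this gives $\sum_{k=1}^n 2^{2q}(8\nu)^{2rk}\binom{n-1}{k-1}=2^{2q}(8\nu)^{2r}\bigl(1+(8\nu)^{2r}\bigr)^{n-1}$, and since $(8\nu)^{2r}\ge 1$ we have $1+(8\nu)^{2r}\le 2(8\nu)^{2r}$, so the sum is at most $\tfrac12\,2^{2q}\bigl(2(8\nu)^{2r}\bigr)^{n}<2^{2q}\bigl(2(8\nu)^{2r}\bigr)^{n}$, as required.

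To bound $T_k$ I would view a $Q$-connected support as a connected vertex set of size $k+1$ containing the fixed root $Q$ in the graph whose vertices are $Q$ together with the elements of $\mathfrak{B}$ and whose edges join sets that intersect. Every such connected set admits at least one spanning tree rooted at $Q$, so $T_k$ is at most the number of these rooted trees, which I would enumerate by a depth-first traversal. Each non-root vertex is attached to a single \emph{parent} through a common lattice point; by Lemma~\ref{lemma:assoc_graph} a parent in $\mathfrak{B}$ has at most $r+1$ points, so there are at most $r+1$ choices of attaching point, and by Lemma~\ref{lemma:estim_2} at most $(4\nu)^{2r-1}$ sets of $\mathfrak{B}$ pass through a given point. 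The shape of the traversal (the sequence of descents and ascents) costs at most a Catalan factor $4^{k}$. The single-parent attachment is precisely what avoids a factorial blow-up: attaching each new set to the whole explored region instead of to one parent would make the per-step branching grow with $k$ and introduce a $k!$. The slack inequality $4\,(r+1)(4\nu)^{2r-1}\le(8\nu)^{2r}$ then lets me absorb the shape factor and the attaching-point factor into the base $(8\nu)^{2r}$, one factor per edge; note also that $L\le(8\nu)^{2r}$ by Lemma~\ref{L exists}, so the same base governs every internal attachment.

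The remaining point is the attachment to the root. Since $|Q|\le q+1$, the set $Q$ has at most $(q+1)(4\nu)^{2r-1}$ neighbours in the graph, a number that depends on $q$; the edges from $Q$ to its children in the spanning tree must be handled so that this $q$-dependence ends up in the prefactor $2^{2q}$ rather than in the exponential base. The key is that the children of $Q$ form an \emph{unordered subset} of $Q$'s neighbours, so choosing $a$ of them costs $\binom{(q+1)(4\nu)^{2r-1}}{a}$ rather than $\bigl((q+1)(4\nu)^{2r-1}\bigr)^{a}$; the factorial saving in the binomial coefficient, played off against the $(4\nu)^{2r-1}$-worth of budget each child already carries, collapses the total root contribution to at most $2^{2q}$. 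One checks this directly in the extreme ``star'' case, where the support is $k$ neighbours of $Q$ and $T_k\le\binom{(q+1)(4\nu)^{2r-1}}{k}$, and the general case follows by the same reasoning applied at the root. Combining the root factor $2^{2q}$ with one factor $(8\nu)^{2r}$ per edge yields the sub-claim.

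I expect the support count $T_k$ to be the main obstacle, for two distinct reasons: enforcing connectivity without a factorial blow-up forces the single-parent (tree) bookkeeping rather than the naive ``intersect the union so far'' bound, and isolating the dependence on $q$ into the clean prefactor $2^{2q}$ requires exploiting the bounded, unordered nature of $Q$'s neighbourhood. Both steps are helped by the fact that the target base $(8\nu)^{2r}$ is deliberately loose --- it exceeds the true internal branching $(r+1)(4\nu)^{2r-1}$ by a factor of at least $8$ --- so no individual estimate needs to be tight, which is consistent with the paper's remark that accurate constants are not sought.
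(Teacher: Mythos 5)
Your proposal is correct in substance but takes a genuinely different route from the paper's proof. The paper never separates supports from multiplicities: it encodes each $Q$-connected family $\Gamma$, multiplicities included, as a single annotated track $Tr_\Gamma$, built by splicing the extended track of each $C_j$ into the track of $Q$ at the first point of intersection and recording the multiplicity $n_j+1$ at the splice point; injectivity of this encoding then bounds the number of families by the number of annotated tracks, which is at most $(2\nu)^{2rn}\cdot2^{2q}\cdot2^{n(4r+1)}=2^{2q}\left(2(8\nu)^{2r}\right)^n$. You instead factor the count as (number $T_k$ of $Q$-connected supports of size $k$) times the composition count $\binom{n-1}{k-1}$, reduce the lemma to the sub-claim $T_k\le 2^{2q}(8\nu)^{2rk}$ --- your reduction algebra is correct --- and bound $T_k$ by enumerating rooted spanning trees of the intersection graph. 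What each buys: the paper's argument stays entirely inside the track machinery it has already set up (Lemmas \ref{lemma:track_1}--\ref{lemma:estim_1}) and disposes of geometry and multiplicities in one stroke; yours is the more modular and more standard ``connected subgraphs via trees'' argument, it reuses Lemma \ref{lemma:estim_2} and the degree bound behind Lemma \ref{L exists}, and it makes transparent exactly where the prefactor $2^{2q}$ (the root) and the base $(8\nu)^{2r}$ (one factor per edge) come from.

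Two points in your sketch need care, and both do close. First, to keep the shape factor at the Catalan bound $4^k$ while treating the root's children as an \emph{unordered} set, you should use a canonical DFS (children always visited in a fixed, say lexicographic, order): a tree is then recovered from the unordered child set of $Q$, the Dyck path, and the internal child choices, giving $T_k\le\sum_{a\ge1}\binom{D_Q}{a}\,4^k L^{k-a}$ with $D_Q\le(q+1)(4\nu)^{2r-1}$; a naive DFS with ordered root children would cost $D_Q^{\,a}$ and push the $q$-dependence into the exponential base, exactly the failure you identified. Second, the slack is thinner than your accounting suggests once the Catalan factor is spent: per internal edge the leftover ratio is $(8\nu)^{2r}/\bigl(4(r+1)(4\nu)^{2r-1}\bigr)=4^r\nu/(r+1)$, which equals $2$ at $r=\nu=1$, and the root sum $\sum_a\frac{(q+1)^a}{a!}\le e^{q+1}\le e\cdot2^{2q}$ leaves a stray constant $e$ that must be absorbed by the leftover $4^r\nu\ge4>e$ on a root edge (there is always at least one such edge, by $Q$-connectedness). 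With that bookkeeping the sub-claim $T_k\le2^{2q}(8\nu)^{2rk}$ holds, and your composition identity $\sum_{k=1}^{n}(8\nu)^{2rk}\binom{n-1}{k-1}=(8\nu)^{2r}\bigl(1+(8\nu)^{2r}\bigr)^{n-1}\le\tfrac12\bigl(2(8\nu)^{2r}\bigr)^n$ finishes the lemma.
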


\begin{proof}
Denote $*$ the operation of concatenation of two arbitrary sequences. Thus, for sequences $\alpha=(\alpha_0,\ldots,\alpha_m)$ and $\tau=(\tau_0,\ldots,\tau_l)$:
\[\alpha*\tau=(\alpha_0,\ldots,\alpha_m,\tau_0,\ldots,\tau_l).\]

We also denote:
\[\alpha*\Big|_j\tau=(\alpha_0,\ldots,\alpha_j,\tau_0,\ldots,\tau_l,\alpha_{j+1},\ldots,\alpha_m);\,j=1,\ldots,m,\]
which means inserting the sequence $\tau$ between two consecutive elements of $\alpha$. Clearly, 
\[\alpha*\Big|_m\tau=\alpha*\tau.\]

Denote $h=2q$ and $p=2r$. Fix the associated track $tr_Q$ of the set $Q$: 
\[tr_Q=(\boldsymbol{t}_{00},\ldots,\boldsymbol{t}_{0h}).\]

For each $Q$-connected family $\Gamma$ we will construct a sequence $Tr_{\Gamma}$ as follows.
Consider a $Q$-connected family $\Gamma=\{(C_1,n_1),\ldots,(C_k,n_k)\}$ of length $n$. By induction on $j$ $(j=0,1,\ldots,k)$ we will construct a track $Tr_j$ and a sequence of pairs $\widetilde{Tr}_j$.

\textit{Case} $j=0$.

Define 
\[Tr_0=tr_Q\text{ and }\widetilde{Tr}_0=\left((a_{00},\boldsymbol{t}_{00}),\ldots,(a_{0h},\boldsymbol{t}_{0h})\right),\text{ where }
a_{0i}=\left\lbrace\begin{array}{cc}
0\;\text{ if }\;\boldsymbol{t}_{0i}\notin Q,
\\
1\;\text{ if }\;\boldsymbol{t}_{0i}\in Q.
\end{array}
\right.\]

\textit{Inductive Step}.
Assume we have constructed a track $Tr_{j-1}$ and a sequence $\widetilde{Tr}_{j-1}$. Due to connectedness, 
\[(Q\cup C_1\cup\ldots\cup C_{j-1})\cap\left(\bigcup\limits_{i=j}^kC_i\right)\neq\varnothing.\]

Select $\boldsymbol{t}_{\beta_j}$ as the first element of the track $Tr_{j-1}$ that belongs to this intersection (but not the start element of $Tr_{j-1}$). Without loss of generality we can assume 
\[\boldsymbol{t}_{\beta_j}\in(Q\cup C_1\cup\ldots\cup C_{j-1})\cap C_j.\]

Choose an extended track $tr_j$ of the set $C_j$: 
$tr_j=(\boldsymbol{t}_{j0},\boldsymbol{t}_{j1},\ldots,\boldsymbol{t}_{jp}).$
Since the track is a closed path, we can assume it starts at the point $\boldsymbol{t}_{\beta_j}$: $\boldsymbol{t}_{j0}=\boldsymbol{t}_{\beta_j}$.
Define 
\[Tr_j=Tr_{j-1}*\Big|_{\beta_j}(\boldsymbol{t}_{j1},\ldots,\boldsymbol{t}_{jp}).\]

Denote $Tr'_{j-1}$ the sequence $\widetilde{Tr}_{j-1}$, where the element with index $\beta_j$ is changed from $(a,\boldsymbol{t})$ to $(n_j+1,\boldsymbol{t})$.

Define
\[\widetilde{Tr}_j
=Tr'_{j-1}*\Big|_{\beta_j}
\left((a_{j1},\boldsymbol{t}_{j1}),\ldots,(a_{jp},\boldsymbol{t}_{jp})\right),\text{ where }
a_{ji}=\left\lbrace\begin{array}{cc}
0\;\text{ if }\;\boldsymbol{t}_{ji}\notin C_j,
\\
1\;\text{ if }\;\boldsymbol{t}_{ji}\in C_j.
\end{array}
\right.\]

At the end of this process we take $Tr_\Gamma=\widetilde{Tr}_k$.

Next we estimate the number of sequences of the form $Tr_\Gamma$. A sequence $Tr_\Gamma=\widetilde{Tr}_k$ has $(1+h+kp)$ elements; each of these elements is a pair of a number and a point in $\mathbb{Z}^\nu$. The $(1+h)$ points of $tr_Q$ are fixed. For each of the remaining $kp$ points there are at most $2\nu$ choices, so there are at most $(2\nu)^{kp}\leqslant(2\nu)^{np}$ ways to choose second elements of the pairs in $Tr_\Gamma$. 

Next we estimate the number of choices for first elements of the pairs in $Tr_\Gamma$. 
There are $\binom{n+k-1}{k-1}$ ways to split $n$ into a sum $n=n_1+\ldots+n_k$.
If $n_1,\ldots,n_k$ are chosen, there are $\binom{h+kp}{k}$ ways to assign the numbers 
\\$(n_1+1),\ldots,(n_k+1)$ to first elements of some of the pairs in $Tr_\Gamma$. There are $2^{kp-k}$ ways to assign 0 and 1 to first elements of the remaining pairs (since the points of $tr_Q$ are fixed). So the total number of choices for first elements of the pairs in $Tr_\Gamma$ is at most
\[
\sum\limits_{k=1}^n {{n+k-1}\choose{k-1}}\cdot {{h+kp}\choose{k}}\cdot 2^{kp-k}
\leqslant 
\sum\limits_{k=1}^n 2^{n+k-1}\cdot2^{h+kp}\cdot2^{kp-k}
\]
\[
= 2^{n+h-1}\sum\limits_{k=1}^n 2^{2kp}
\leqslant 2^{n+h-1}\sum\limits_{j=1}^{2np}2^{j}
<2^{n+h-1}\cdot2^{2np+1}
=2^{h}\cdot2^{n(2p+1)}.
\]

So the total number of sequences of the form $Tr_\Gamma$ is less than 
\[(2\nu)^{np}\cdot2^{h}\cdot2^{n(2p+1)}
=2^h\left(2^{3p+1}\nu^p\right)^n
=2^{h}\left(2(8\nu)^{p}\right)^n
=2^{2q}\left(2(8\nu)^{2r}\right)^n.\]

In order to prove the same estimation for the number of $Q$-connected families $\Gamma$ with $|\Gamma|=n$ it is sufficient to show that a family $\Gamma$ can be restored from a sequence $Tr_{\Gamma}$ and it is unique (then each $Tr_{\Gamma}$ corresponds to only one $\Gamma$).

Consider one of the previously constructed sequences and denote it $Tr$. We will restore the family $\Gamma$ such that $Tr=Tr_{\Gamma}$. $Tr$ is a sequence of pairs that contains a sequence $\left((b_{00},\boldsymbol{t}_{00}),\ldots,(b_{0h},\boldsymbol{t}_{0h})\right)$ corresponding to the sequence $tr_Q=(\boldsymbol{t}_{00},\ldots,\boldsymbol{t}_{0h})$. 
We can assume that the sequence $Tr$ starts with the pair $(1,\boldsymbol{t}_{00})$.

 Denote $k$ the number of the pairs in $Tr$, whose first elements are greater than 1, and denote these first elements $m_1,\ldots,m_k$. Then $n_i=m_i-1$ $(i=1,\ldots,k)$, according to our construction.
So $\Gamma=\{(C_1,n_1)\ldots,(C_k,n_k)\}$. It remains to find the sets $C_j$ $(j=1,\ldots,k)$.
We will restore each set $C_j$ and each sequence $\widetilde{Tr}_{j-1}$ by induction on $k-j$.
\medskip

\textit{Case} $k-j=0$.
\medskip

Then $j=k$; $\widetilde{Tr}_{k}=Tr$. Find the last pair in $\widetilde{Tr}_{k}$, whose first element is greater than 1, and next $p$  pairs in $\widetilde{Tr}_{k}$. Then $\widetilde{Tr}_{k-1}$ is obtained from the sequence $\widetilde{Tr}_{k}$ by removing all these pairs and $C_k$ is the set of all second elements of these pairs that have 1 on the first place.

\textit{Inductive Step}. Assume we have restored sets $C_k,\ldots,C_{j+1}$ and a sequence $\widetilde{Tr}_{j}$. Find the last pair in $\widetilde{Tr}_{j}$, whose first element is greater than 1, and next $p$  pairs in $\widetilde{Tr}_{j}$. Then $\widetilde{Tr}_{j-1}$ is obtained from the sequence $\widetilde{Tr}_{j}$ by removing all these pairs and $C_j$ is the set of all second elements of these pairs that have 1 on the first place.

Clearly, this induction process produces the unique family \\  
$\Gamma=\{(C_1,n_1),\ldots,(C_k,n_k)\}$ with $Tr_{\Gamma}=Tr$.
\end{proof}

\subsection{Estimation of Semi-Invariants}

\begin{lemma}
For any $Q$-connected family $\Gamma=\{(C_1,n_1), \ldots,(C_k,n_k)\}$ of length $n$:
\[|\langle I_A,\Phi_\Gamma\rangle_0|\leqslant \frac{9}{2}P_0(A)\lambda^n (n+1)3^n\prod_{j=1}^k(u_j+1)^{n_j},\]
where each $u_j=u_j(\Gamma).$
\label{lemma:estim Mal}
\end{lemma}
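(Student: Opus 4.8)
The plan is to start from the partition (cumulant) formula for the semi-invariant, exploit that $P_0$ is a product measure to dominate it by a sum over spanning trees of the intersection graph of the family, bound each surviving term by the uniform bound on $\Phi$, and finally count the trees using the connectivity numbers $u_j$. First I would label the $n+1$ arguments of the semi-invariant as $Y_0=I_A$ together with $Y_1,\ldots,Y_n$, the latter listing the random variables $\Phi_{C_i}$ repeated according to their multiplicities $n_i$, so that by Lemma \ref{lemma:seq_family} we may work with this ordered list, and write
\[
\langle I_A,\Phi_\Gamma\rangle_0=\langle Y_0,\ldots,Y_n\rangle_0
=\sum_{\pi}(-1)^{|\pi|-1}(|\pi|-1)!\prod_{V\in\pi}\Big\langle\prod_{i\in V}Y_i\Big\rangle_0,
\]
the sum running over all partitions $\pi$ of $\{0,1,\ldots,n\}$. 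Each argument is localized, since $I_A$ is $\Sigma_Q$-measurable and each $\Phi_{C_i}$ is $\Sigma_{C_i}$-measurable; because $\{X_{\boldsymbol t}\}$ is independent under $P_0$, every block moment $\langle\prod_{i\in V}Y_i\rangle_0$ factorizes across any splitting of the block into sub-collections whose localization sets are mutually disjoint.

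Second, I would bound each block moment by the estimates $|\Phi_{C_i}|\leqslant\lambda$ and $0\leqslant I_A\leqslant1$: a block not containing the index $0$ contributes at most $\lambda^{|V|}$, while the block containing $0$ contributes at most $P_0(A)\,\lambda^{|V|-1}$, and multiplying over all blocks of any $\pi$ yields the overall factor $P_0(A)\,\lambda^n$. The product structure of $P_0$ is then used to reorganize the alternating partition sum: the factorization of block moments across disjoint localization sets collapses the contributions so that the semi-invariant is dominated by a sum over connected spanning subgraphs, and hence over spanning trees, of the graph $G$ whose vertices are $0,1,\ldots,n$ and whose edges join indices with intersecting localization sets. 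In this reorganization the alternating factorials $(|\pi|-1)!$ and the enumeration of subgraphs combine into the numerical pieces $(n+1)\,3^n$ of the claimed bound, with the residual constant absorbed into $\frac{9}{2}$.

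Third, which is the crux, I would estimate the number of admissible spanning trees of $G$ by $\prod_{j=1}^k(u_j+1)^{n_j}$. Rooting every spanning tree at the vertex $0$ corresponding to $Q$, each of the remaining $n$ vertices acquires a unique parent, which must be one of its neighbours in $G$. A vertex labelled by a copy of $C_j$ can only be adjacent to $Q$ and to copies of sets $C_i$ with $C_i\cap C_j\neq\varnothing$; by Definition \ref{multi} the number of such copies is $u_j(\Gamma)$, so there are at most $u_j+1$ choices of parent for each copy of $C_j$. Since $C_j$ occurs with multiplicity $n_j$, this yields at most $(u_j+1)^{n_j}$ parent assignments for its copies, and taking the product over $j$ bounds the number of rooted trees, hence of spanning trees, by $\prod_{j=1}^k(u_j+1)^{n_j}$. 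Combining this count with the per-term bound $P_0(A)\,\lambda^n$ and the numerical factors gives the stated inequality.

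The step I expect to be the main obstacle is the reorganization in the second and third paragraphs taken together: rigorously passing from the signed partition sum to an unsigned domination by spanning trees, so that the independence of $P_0$ is fully exploited and the factorial coefficients $(|\pi|-1)!$ are genuinely controlled rather than crudely bounded by $n!$, while at the same time arranging the count so that the connectivity enters exactly as $\prod_j(u_j+1)^{n_j}$ and not as a weaker global factor. The parent-assignment argument makes the degree count clean, so the delicate point is the tree-graph domination of the cumulant itself; this is where the product structure of $P_0$ and the localization of the $\Phi_{C_i}$ must be used in full.
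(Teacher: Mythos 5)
Your overall architecture matches the paper's: you bound the block moments by $P_0(A)\lambda^n$ using $|\Phi_{C_i}|\leqslant\lambda$ and $0\leqslant I_A\leqslant 1$, and you convert connectivity into the factor $\prod_{j=1}^k(u_j+1)^{n_j}$ by a degree count in the intersection graph (your parent-assignment count of rooted spanning trees is a valid upper bound, and it mirrors the paper's step $\prod_{i=0}^n v_i = v_0\prod_{j=1}^k w_j^{n_j}\leqslant (n+1)\prod_{j=1}^k(u_j+1)^{n_j}$). The difference is that the paper never re-derives the tree-type cumulant estimate: it cites Theorem 1 of \cite{MM91}, p.~69, which states directly that
\[
|\langle I_A,\Phi_\gamma\rangle_0|\leqslant C_f\cdot\tfrac{3}{2}\prod_{i=0}^n(3v_i),
\]
where $v_i$ counts the elements of $(Q,B_1,\ldots,B_n)$ meeting the $i$-th entry and $C_f$ is a maximum of products of ordinary moments over partitions. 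The constants $\tfrac{9}{2}$, $3^n$ and $(n+1)$ in the lemma have this specific provenance ($\tfrac{3}{2}\cdot 3^{n+1}$ from the cited bound, $(n+1)$ from $v_0\leqslant n+1$); they are not the residue of a resummation still to be performed.

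The genuine gap is exactly the step you flag as the ``main obstacle'': passing from the signed partition formula $\sum_\pi(-1)^{|\pi|-1}(|\pi|-1)!\prod_{V\in\pi}\langle\prod_{i\in V}Y_i\rangle_0$ to an unsigned domination by spanning trees with constants only exponential in $n$. This cannot be waved through as a ``reorganization'': bounding the partition sum term by term gives $\sum_\pi(|\pi|-1)!\,P_0(A)\lambda^n$, which grows factorially in $n$ and is useless for the convergence arguments that follow; the entire content of the Malyshev--Minlos theorem (and of tree-graph inequalities of Brydges--Battle--Federbush type) is precisely the cancellation that reduces this signed sum to a tree sum. Your proposal asserts that the factorials $(|\pi|-1)!$ and the subgraph enumeration ``combine into'' $(n+1)3^n$ with the remainder absorbed into $\tfrac{9}{2}$, but it offers no mechanism for this, and this is the hard part of the whole estimate. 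Until that inequality is proved from scratch (a substantial theorem in its own right) or cited, as the paper does, your proof is incomplete; everything else in your argument --- the moment bounds and the $\prod_{j=1}^k(u_j+1)^{n_j}$ count --- is correct and coincides with how the paper assembles the lemma around the cited theorem.
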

\begin{proof}

Let us fix a sequence $\gamma=(B_1,\ldots,B_n)$ that reduces to $\Gamma$ (such a sequence always exists, according to Lemma \ref{lemma:family}). Denote $\gamma'=(Q,B_1,\ldots,B_n)$. 

For $i=1,2,\ldots,n$ denote $v_i$ the number of elements of $\gamma'$ that intersect with $B_i$. Denote $v_0$ the number of elements of $\gamma'$ that intersect with $Q$.

$I_A$ and all $\Phi_{B_i}$ are random variables on $(\Omega,\Sigma)$. 
According to Lemma \ref{lemma:seq_family} and Theorem 1 in \cite{MM91}, pg. 69,
\[|\langle I_A,\Phi_\Gamma\rangle_0|= |\langle I_A,\Phi_\gamma\rangle_0|\leqslant C_f\cdot\frac{3}{2} \prod_{i=0}^n(3v_i).\]

\[\text{Here }C_f=\max\Big|\langle I_A\cdot\prod_{i\in M_1}\Phi_{B_i}\rangle_0 \langle \prod_{i\in M_2}\Phi_{B_i}\rangle_0\ldots\langle \prod_{i\in M_l}\Phi_{B_i}\rangle_0\Big|,\]
where the maximum is taken over all partitions $\{\{0\}\cup M_1,M_2,\ldots,M_l\}$ of the set $\{0,1,2,\ldots,n\}$. 

Each $|\Phi_{B_i}|\leqslant\lambda$, so  $C_f\leqslant P_0(A)\lambda^n$. Hence
\[|\langle I_A,\Phi_\Gamma\rangle_0|\leqslant P_0(A)\lambda^n\cdot\frac{3}{2} \prod_{i=0}^n(3v_i)
=\frac{3}{2}P_0(A)\lambda^n3^{n+1}\prod_{i=0}^nv_i
=\frac{9}{2}P_0(A)\lambda^n3^{n}\prod_{i=0}^nv_i.\]

For $j=1,2,\ldots,k$ denote $w_j$ the number of elements of $\gamma'$ that intersect with $C_j$. Each  $C_j$ is repeated $n_j$ times in $\gamma$, so
\[\prod_{i=0}^nv_i
=v_0\prod_{i=1}^nv_i
=v_0\prod_{j=1}^k(w_j)^{n_j}.\]

Since $v_0\leqslant n+1$ and each $w_j\leqslant u_j+1$, we have:
\[\prod_{i=0}^nv_i
\leqslant (n+1)\prod_{j=1}^k(u_j+1)^{n_j} \text{ and}\]
\[|\langle I_A,\Phi_\Gamma\rangle_0|\leqslant \frac{9}{2}P_0(A)\lambda^n3^{n}(n+1)\prod_{j=1}^k(u_j+1)^{n_j}.\]
\end{proof}

\begin{lemma}
For any $Q$-connected family $\Gamma=\{(C_1,n_1), \ldots,(C_k,n_k)\}$ of length $n$: 
\[\prod_{j=1}^k (u_j+1)^{n_j}<(eL)^n \prod_{j=1}^k(n_j)^{n_j},\]
\label{Es 1}
where each $u_j=u_j(\Gamma).$
\end{lemma}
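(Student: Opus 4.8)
The plan is to pass to logarithms and reduce the claim to one scalar inequality. Since $n=\sum_j n_j$ we have $(eL)^n=\prod_j(eL)^{n_j}$, so taking $\ln$ of both sides shows that the assertion $\prod_j(u_j+1)^{n_j}<(eL)^n\prod_j n_j^{n_j}$ is equivalent to
\[
\sum_{j=1}^{k}n_j\,\ln\frac{u_j+1}{eL\,n_j}<0 .
\]
I would stress at the outset that this cannot be proved term by term: a set $C_j$ of small multiplicity $n_j$ may meet several neighbours of large multiplicity, making $u_j$ far bigger than $eLn_j$, so the excess in one factor must be charged collectively against the neighbours' own contributions. The logarithmic form is what makes this redistribution transparent, turning a product estimate into an additive one.

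First I would apply the elementary bound $\ln x\le x-1$ to each summand, obtaining
\[
\sum_{j}n_j\,\ln\frac{u_j+1}{eL\,n_j}\le\sum_{j}n_j\!\left(\frac{u_j+1}{eL\,n_j}-1\right)=\frac{1}{eL}\sum_{j}(u_j+1)-\sum_j n_j=\frac{1}{eL}\sum_{j}(u_j+1)-n .
\]
Everything then reduces to an upper bound on $\sum_j(u_j+1)$, which I would obtain by a handshake (double-counting) argument. Writing $u_j=\sum_{i:\,C_i\cap C_j\ne\varnothing}n_i$ and interchanging the order of summation,
\[
\sum_{j=1}^{k}u_j=\sum_{i=1}^{k}n_i\,\bigl|\{\,j:\;C_i\cap C_j\ne\varnothing\,\}\bigr|\le L\sum_{i=1}^{k}n_i=Ln,
\]
where the key inequality $|\{j:C_i\cap C_j\ne\varnothing\}|\le l(C_i)\le L$ is immediate from Definition \ref{L}: the $C_i$ are distinct members of $\mathfrak{B}$, so counting intersecting indices is the same as counting intersecting sets. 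Together with $k\le n$ (each $n_j\ge1$) this gives $\sum_j(u_j+1)\le(L+1)n$.

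Combining the two estimates yields
\[
\sum_{j}n_j\,\ln\frac{u_j+1}{eL\,n_j}\le\frac{(L+1)n}{eL}-n=n\!\left(\frac{L+1}{eL}-1\right),
\]
and since $L\ge1$ the factor $\frac{L+1}{eL}=\frac1e\bigl(1+\frac1L\bigr)\le\frac{2}{e}<1$, so the right-hand side is strictly negative (as $n\ge1$), which is exactly the desired strict inequality. I expect the only genuine subtlety to be the global rather than pointwise character of the estimate: recognising that the handshake identity is precisely what converts the a priori uncontrolled quantities $u_j$ into the clean factor $L$, and checking that the constant $e$ is exactly what the step $\ln x\le x-1$ requires in order for $\frac{L+1}{eL}<1$ to hold for every $L\ge1$.
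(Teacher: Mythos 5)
Your proof is correct, and it takes a genuinely different route from the paper's. The paper writes $(u_j+1)^{n_j}=\left(1+1/u_j\right)^{n_j}u_j^{n_j}$, bounds $\prod_j\left(1+1/u_j\right)^{n_j}<e^n$ via $n_j\leqslant u_j$, and then gets $\prod_j u_j^{n_j}\leqslant L^n\prod_j n_j^{n_j}$ by importing the inequality $\sum_j n_j\ln(u_j/n_j)\leqslant n\ln L$ from Theorem 4.1 of \cite{K15}; your argument replaces that imported combinatorial core by the handshake double count $\sum_j u_j=\sum_i n_i\,\big|\{j:\,C_i\cap C_j\neq\varnothing\}\big|\leqslant Ln$ (valid because the $C_i$ are distinct members of $\mathfrak{B}$, so the count of intersecting indices is at most $l(C_i)\leqslant L$ by Definition \ref{L}), combined with the linearization $\ln x\leqslant x-1$ and $k\leqslant n$. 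The result is a fully self-contained, elementary proof; indeed your two steps, applied to $u_j/(Ln_j)$ in place of $(u_j+1)/(eLn_j)$, reprove the cited inequality: $\sum_j n_j\ln\frac{u_j}{Ln_j}\leqslant\frac{1}{L}\sum_j u_j-n\leqslant 0$, which is exactly \eqref{eq ln L}. What the paper's factorization buys is a transparent accounting of where the constant $e$ comes from (each factor $\left(1+1/u_j\right)^{n_j}<e$), whereas your proof absorbs $e$ and $L$ into a single convexity step and therefore needs the easily checked but worth-stating facts that $L\geqslant 1$ (every $B\in\mathfrak{B}$ intersects itself, so $l(B)\geqslant1$) and $n\geqslant 1$ (a family is nonempty), so that $n\left(\frac{L+1}{eL}-1\right)$ is indeed strictly negative.
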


\begin{proof} 
$$u_j+1=\left(1+\dfrac{1}{u_j}\right)u_j$$
and 
\begin{equation}
\prod_{j=1}^k\left(u_j+1\right)^{n_j}
=\prod_{j=1}^k \left(1+\dfrac{1}{u_j}\right)^{n_j}
\prod_{j=1}^k u_j^{n_j}.
\label{eq:product_1}
\end{equation}

From Definition \ref{multi} we get $n_j\leqslant u_j$, so
\[\prod_{j=1}^k \left(1+\dfrac{1}{u_j}\right)^{n_j}\leqslant\prod_{j=1}^k \left(1+\dfrac{1}{n_j}\right)^{n_j}<\prod_{j=1}^k e=e^k\leqslant e^n.\]
Thus,
\begin{equation}
\prod_{j=1}^k \left(1+\dfrac{1}{u_j}\right)^{n_j}< e^n.
\label{eq:product_2}
\end{equation}

By Theorem 4.1 in \cite{K15} we have:
\begin{equation}
\label{eq ln L}
\sum_{j=1}^k\left(n_j \ln\frac{u_j}{n_j}\right)\leqslant n\ln L.
\end{equation}
Since 
\[\prod_{j=1}^k \left(\dfrac{u_j}{n_j}\right)^{n_j}= e^{\,\sum\limits_{j=1}^k \left(n_j \ln\frac{u_j}{n_j}\right)},\]
we imply from \eqref{eq ln L}:
\[\prod_{j=1}^k\left(\dfrac{u_j}{n_j}\right)^{n_j}\leqslant e^{n\ln L}=L^n.\]
So
\begin{equation}
\prod_{j=1}^k u_j^{n_j} \leqslant L^n\prod_{j=1}^k n_j^{n_j}.
\label{eq:ln}
\end{equation}

From \eqref{eq:product_1}, \eqref{eq:product_2} and \eqref{eq:ln} we get:
$$\prod_{j=1}^k\left(u_j+1\right)^{n_j}< e^nL^n\prod_{j=1}^k n_j^{n_j}= (eL)^n\prod_{j=1}^k n_j^{n_j}.$$
\end{proof}

\begin{lemma} 
Suppose $\Gamma$ is a $Q$-connected family of length $n$ and $n>3$. Then
\[|\langle I_A,\Phi_\Gamma\rangle_0|<P_0(A)\lambda^n(3e^2L)^n(n+1)\Gamma!\]
\label{lemma:estim Mal 2}
\end{lemma}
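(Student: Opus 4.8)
The plan is to feed the two preceding estimates into one another and then isolate a single elementary inequality in the multiplicities $n_1,\dots,n_k$. First I would chain Lemma~\ref{lemma:estim Mal} with Lemma~\ref{Es 1}. The former gives
\[|\langle I_A,\Phi_\Gamma\rangle_0|\leqslant\tfrac{9}{2}P_0(A)\lambda^n(n+1)3^n\prod_{j=1}^k(u_j+1)^{n_j},\]
while the latter bounds the product strictly above by $(eL)^n\prod_{j=1}^k n_j^{n_j}$. Since $3^n(eL)^n=(3eL)^n$, multiplying yields
\[|\langle I_A,\Phi_\Gamma\rangle_0|<\tfrac{9}{2}P_0(A)\lambda^n(n+1)(3eL)^n\prod_{j=1}^k n_j^{n_j}.\]
In the target estimate the factor is $(3e^2L)^n=(3eL)^n e^n$ and the product $\prod_j n_j^{n_j}$ is replaced by $\Gamma!=\prod_j n_j!$, so it remains only to prove the purely combinatorial inequality
\[\tfrac{9}{2}\prod_{j=1}^k n_j^{n_j}<e^n\prod_{j=1}^k n_j!=e^n\,\Gamma!,\]
valid whenever $n=|\Gamma|>3$.

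To prove this I would introduce $g(m)=\dfrac{e^m m!}{m^m}$, so that the required inequality is exactly $\prod_{j=1}^k g(n_j)>\tfrac{9}{2}$. Only two facts about $g$ are needed: $g(1)=e$, and $g$ is strictly increasing, because $\dfrac{g(m+1)}{g(m)}=\dfrac{e}{(1+1/m)^m}>1$ since $(1+1/m)^m<e$. In particular $g(m)\geqslant e$ for every $m\geqslant1$.

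The last step is a case split on the number $k$ of distinct sets, and this is where the hypothesis $n>3$ enters. If $k\geqslant2$, then every factor is at least $e$, so $\prod_j g(n_j)\geqslant e^2>\tfrac{9}{2}$ with no further condition. If $k=1$, then $n_1=n>3$, hence $n_1\geqslant4$ and $\prod_j g(n_j)=g(n_1)\geqslant g(4)=\tfrac{3e^4}{32}>\tfrac{9}{2}$. I expect this single-block case to be the only real obstacle, and the threshold is sharp: since $g(3)=\tfrac{2e^3}{9}<\tfrac{9}{2}$, the length-$3$ family consisting of one set repeated three times would violate the bound, which is precisely why the statement assumes $n>3$. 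Substituting $\prod_j g(n_j)>\tfrac{9}{2}$ back into the chained estimate gives the desired inequality.
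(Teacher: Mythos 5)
Your proof is correct, and it shares the paper's overall skeleton: chain Lemma~\ref{lemma:estim Mal} with Lemma~\ref{Es 1}, reduce the problem to comparing $\tfrac{9}{2}\prod_j n_j^{n_j}$ with $e^n\prod_j n_j!$, and finish with a case split $k=1$ versus $k\geqslant 2$, with $n>3$ needed only when $k=1$. The difference lies in the key analytic input. The paper invokes Stirling's formula in the form $q^q<\tfrac{1}{\sqrt{2\pi q}}\,q!\,e^q$, which leaves the factor $\tfrac{9}{2(2\pi)^{k/2}\sqrt{n_1\cdots n_k}}$ to be shown less than $1$ by the same two cases. You instead work with the exact ratio $g(m)=e^m m!/m^m$ and derive its monotonicity from the elementary inequality $(1+1/m)^m<e$, then check $g(4)=\tfrac{3e^4}{32}>\tfrac{9}{2}$ and $e^2>\tfrac{9}{2}$. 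What your route buys is self-containedness: no appeal to Stirling (whose $\sqrt{2\pi q}$ constant the paper needs but uses only crudely), and your closing remark that $g(3)=\tfrac{2e^3}{9}<\tfrac{9}{2}$ cleanly explains why the hypothesis $n>3$ is exactly the right threshold for this argument --- the paper's version also fails at $k=1$, $n=3$, since $\tfrac{9}{2\sqrt{2\pi}\sqrt{3}}>1$, but it does not remark on this. What the paper's Stirling bound buys in exchange is a factor decaying like $1/\sqrt{n_1\cdots n_k}$, which is stronger than the constant bound you prove, but that extra strength is never used, so nothing is lost in your version. One small caution: your sharpness remark shows only that this proof strategy breaks at $n=3$ (the intermediate combinatorial inequality fails), not that the lemma's conclusion itself is false for a length-$3$ family, since the upstream estimates are not tight.
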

\begin{proof}
By Lemmas \ref{lemma:estim Mal} and \ref{Es 1}, 
\[|\langle I_A,\Phi_\Gamma\rangle_0|\leqslant \frac{9}{2}P_0(A)\lambda^n (n+1)3^n\prod_{j=1}^k(u_j+1)^{n_j}
<\frac{9}{2}P_0(A)\lambda^n (n+1)3^n(eL)^n\prod_{j=1}^kn_j^{n_j}.\]

By Stirling formula,
$q^q<\dfrac{1}{\sqrt{2\pi q}}\,q!\,e^q$ for any natural $q>0$. So
\begin{multline*}
|\langle I_A,\Phi_\Gamma\rangle_0|
<\frac{9}{2}P_0(A)\lambda^n (n+1)(3eL)^n
\prod_{j=1}^k \Big(\dfrac{1}{\sqrt{2\pi n_j}}\,n_j!\,e^{n_j}\Big)
\\
\leqslant \frac{9}{2}P_0(A)\lambda^n(3eL)^n(n+1)
\frac{1}{(2\pi)^{\frac{k}{2}}\sqrt{n_1n_2\ldots n_k}}\,e^n
\prod_{j=1}^k n_j!
\medskip
\\
=P_0(A)\lambda^n(3e^2L)^n(n+1)
\frac{9}{2(2\pi)^{\frac{k}{2}}\sqrt{n_1n_2\ldots n_k}}\,
\Gamma!
\end{multline*}
since $\Gamma!=
\prod\limits_{j=1}^k n_j!$. It remains to prove:
\[\frac{9}{2(2\pi)^{\frac{k}{2}}\sqrt{n_1n_2\ldots n_k}}<1.\]

Consider two cases.

\textit{Case 1}. $k=1$.
\\
Then $n_1=n\geqslant4$, $(2\pi)^{k/2}=\sqrt{2\pi}>2.5$ and 
\[\frac{9}{2(2\pi)^{\frac{k}{2}}\sqrt{n_1n_2\ldots n_k}}
=\frac{9}{2\sqrt{2\pi}\sqrt{n_1}}<
\frac{9}{2\times 2.5\times 2}<1.\]

\textit{Case 2}. $k\geqslant2$. 
\\
Then $(2\pi)^{k/2}\geqslant2\pi>6$ and 
 $\dfrac{9}{2(2\pi)^{k/2}\sqrt{n_1 n_2\ldots n_k}}
<\dfrac{9}{2\cdot6\cdot1}
<1.$
\end{proof}

\section{Proof of the Convergence Theorem }
\label{section:main_proof}
Here we use results from the previous section to prove Theorem \ref
{theorem:Gibbs_measure}. 

\begin{notation} 
Denote
\[J_A(N,n)=
\sum_{\Gamma}\frac{1}{\Gamma !}\langle I_A,\Phi_{\Gamma}\rangle_0,\]
where the sum is taken over all $Q$-connected families $\Gamma=\{(C_1,n_1), \ldots,(C_k,n_k)\}$ of length $n$ such that each $C_i\subseteq\Lambda_N$.
\label{notation:main}
\end{notation}

\begin{lemma}
Suppose $0\leqslant\lambda\leqslant\lambda_0$. Then 
\[P_N(A)=\sum_{n=0}^{\infty} J_A(N,n).\]
\label{lemma:family_sum}
\end{lemma}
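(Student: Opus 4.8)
The plan is to read $P_N(A)$ off a cumulant (semi-invariant) generating function and then expand that function combinatorially. Starting from the definition of the Gibbs modification, $P_N(A)=\langle I_A e^{U_N}\rangle_0/\langle e^{U_N}\rangle_0$, I would introduce an auxiliary parameter $t$ together with a scaling parameter $s$ for the potential and record the elementary identity
\[
\frac{\partial}{\partial t}\ln\big\langle e^{tI_A+sU_N}\big\rangle_0\Big|_{t=0}
=\frac{\langle I_A e^{sU_N}\rangle_0}{\langle e^{sU_N}\rangle_0}.
\]
Because $\Lambda_N$ is finite, $U_N=\sum_{B\in\mathfrak{B},\,B\subseteq\Lambda_N}\Phi_B$ is a bounded random variable, so $\langle e^{tI_A+sU_N}\rangle_0$ is an entire, non-vanishing function of $(t,s)$ near the origin; its logarithm is analytic there and its Taylor coefficients are, by definition, the mixed semi-invariants $\langle I_A^{(m)},U_N^{(n)}\rangle_0$. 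Differentiating in $t$ at $t=0$ therefore gives, for small $|s|$,
\[
\frac{\langle I_A e^{sU_N}\rangle_0}{\langle e^{sU_N}\rangle_0}
=\sum_{n=0}^{\infty}\frac{s^{n}}{n!}\big\langle I_A,\underbrace{U_N,\ldots,U_N}_{n}\big\rangle_0 .
\]

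First I would convert each semi-invariant into a sum over families. Expanding every copy of $U_N=\sum_{B\subseteq\Lambda_N}\Phi_B$ by multilinearity of semi-invariants gives
\[
\big\langle I_A,\underbrace{U_N,\ldots,U_N}_{n}\big\rangle_0
=\sum_{(B_1,\ldots,B_n)}\langle I_A,\Phi_{B_1},\ldots,\Phi_{B_n}\rangle_0 ,
\]
the sum running over ordered $n$-tuples with each $B_i\subseteq\Lambda_N$. By Lemma \ref{lemma:seq_family} all tuples reducing to a fixed family $\Gamma$ yield the same value $\langle I_A,\Phi_\Gamma\rangle_0$, and by Lemma \ref{lemma:family} there are exactly $n!/\Gamma!$ of them, so the prefactor $1/n!$ collapses to $1/\Gamma!$ and the tuple sum becomes $\sum_{|\Gamma|=n}\tfrac{1}{\Gamma!}\langle I_A,\Phi_\Gamma\rangle_0$. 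Finally Lemma \ref{lemma:not_connected} kills every term whose family is not $Q$-connected, since connectedness of $(Q,B_1,\ldots,B_n)$ depends only on the distinct sets appearing and hence only on $\Gamma$. The coefficient of $s^n$ is thus precisely $J_A(N,n)$, and the expansion reads $\langle I_A e^{sU_N}\rangle_0/\langle e^{sU_N}\rangle_0=\sum_{n=0}^\infty s^n J_A(N,n)$ for small $|s|$.

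The hard part will be passing from small $s$ to the required value $s=1$, i.e. justifying that the series still represents the probability at full coupling; this is exactly where the hypothesis $0\le\lambda\le\lambda_0$ is essential. Combining the count of $Q$-connected families of length $n$ from Lemma \ref{lemma:estim_Gam} with the semi-invariant bound of Lemma \ref{lemma:estim Mal 2}, I would estimate, for $n>3$,
\[
|J_A(N,n)|\le 2^{2q}\big(2(8\nu)^{2r}\big)^{n}P_0(A)\,\lambda^{n}(3e^2L)^{n}(n+1),
\]
whose geometric ratio $6e^2L(8\nu)^{2r}\lambda$ is at most $6e^2/50<1$ once $\lambda\le\lambda_0=1/\big(50L(8\nu)^{2r}\big)$. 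Hence $\limsup_n|J_A(N,n)|^{1/n}<1$, so the power series $\sum_n J_A(N,n)s^n$ has radius of convergence strictly greater than $1$, with a bound uniform in $N$ that will also be needed for the subsequent limit $N\to\infty$. Since $\langle e^{sU_N}\rangle_0>0$ for every real $s$, the function $R(s)=\langle I_A e^{sU_N}\rangle_0/\langle e^{sU_N}\rangle_0$ is analytic on a complex neighbourhood of the segment $[0,1]$ and agrees with this power series near $s=0$; by the identity theorem they coincide on a connected neighbourhood of $[0,1]$, in particular at $s=1$. Evaluating at $s=1$ then yields $P_N(A)=R(1)=\sum_{n=0}^{\infty}J_A(N,n)$, which is the assertion.
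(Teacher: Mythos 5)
Your proof is correct, and its combinatorial core is identical to the paper's: differentiate the log-generating function $\ln\langle e^{xI_A+zU_N}\rangle_0$ in the auxiliary variable, expand each semi-invariant $\langle I_A,U_N,\ldots,U_N\rangle_0$ by multilinearity into a sum over sequences $(B_1,\ldots,B_n)$ with $B_i\subseteq\Lambda_N$, discard the non-connected terms by Lemma \ref{lemma:not_connected}, and collapse sequences to families with the factor $n!/\Gamma!$ from Lemmas \ref{lemma:family} and \ref{lemma:seq_family} (the paper's $z$ is your $s$). Where you genuinely go beyond the paper is at the evaluation point: the paper simply substitutes $z=1$ into the Taylor expansion without addressing whether the series converges there and still represents the function $\langle I_A\,e^{zU_N}\rangle_0/\langle e^{zU_N}\rangle_0$; indeed, the paper's proof of this lemma never visibly uses the hypothesis $0\leqslant\lambda\leqslant\lambda_0$ that it fixes at the outset. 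You supply exactly this missing step: the bounds of Lemmas \ref{lemma:estim_Gam} and \ref{lemma:estim Mal 2} (proved independently of this lemma, so there is no circularity; the paper packages the same computation later as Lemma \ref{lemma:main_0}) give $|J_A(N,n)|\leqslant 2^{2q}P_0(A)(n+1)(0.9)^n$ for $n>3$, hence a radius of convergence strictly greater than $1$ (the finitely many terms with $n\leqslant3$ do not affect the $\limsup$, as your argument implicitly notes); positivity of $\langle e^{sU_N}\rangle_0$ for real $s$ makes $R(s)$ analytic in a complex neighbourhood of the segment $[0,1]$; and the identity theorem transfers the equality from a neighbourhood of $s=0$ to $s=1$. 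This is precisely where $\lambda\leqslant\lambda_0$ does its work, so your version makes the lemma's hypothesis earn its keep, at the modest cost of invoking elementary complex analysis that the paper leaves implicit.
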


\begin{proof}
Fix $\lambda$ such that $0\leqslant\lambda\leqslant\lambda_0$. 
We use the method from \cite{MM91}, page 34.  For real $x,z$ define 
\[f(x,z)=\ln \langle\exp(xI_A+ z U_N)\rangle_0.\]

By the definition of semi-invariants:

\[\frac{\langle I_A\,e^{zU_N}\rangle_0}{\langle e^{zU_N}\rangle_0}=\dfrac{\partial}{\partial x}f(x,z)\Big|_{x=0} =
\sum_{n=0}^{\infty} \frac{z^n}{n!} \langle I_A,
\underbrace{U_N,\ldots,U_N}_{n\,\text{times}}\rangle_0 
\]
\[=
\sum_{n=0}^{\infty} \frac{z^n}{n!} \langle I_A,
\sum_{B_1\in\mathfrak{B},B_1\subseteq\Lambda_N}\Phi_{B_1},\ldots,\sum_{B_n\in\mathfrak{B},B_n\subseteq\Lambda_N}\Phi_{B_n}\rangle_0 \]
\[=
\sum_{n=0}^{\infty} \frac{z^n}{n!}\sum_{\substack{B_1,\ldots,B_n,\\\text{each }B_i\in\mathfrak{B},B_i\subseteq\Lambda_N}}\langle I_A,
\Phi_{B_1},\ldots,\Phi_{B_n}\rangle_0.\]

Thus,
\[\frac{\langle I_A\,e^{zU_N}\rangle_0}{\langle e^{zU_N}\rangle_0}=\sum_{n=0}^{\infty} \dfrac{z^n}{n!}
\sum_{\gamma}\langle I_A,\Phi_{\gamma}\rangle_0,\]
where the inner sum is taken over all sequences $\gamma=\{B_1,\ldots,B_n\}$ such that each $B_i\in\mathfrak{B}$ and $B_i\subseteq\Lambda_N$.

Substituting $z=1$ and using the definition of $P_N$, we get:
\[
P_N(A)=\frac{\langle I_A\,e^{U_N}\rangle_0}{\langle e^{U_N}\rangle_0}=\sum_{n=0}^{\infty} \dfrac{1}{n!}
\sum_{\gamma}\langle I_A,\Phi_{\gamma}\rangle_0.
\]

By Lemma \ref{lemma:not_connected}, the inner sum can be taken only over sequences $\gamma= (B_1, ...,B_n)$ of elements of $\mathfrak{B}$ such that $(Q,B_1, ...,B_n)$ is connected and each $B_i\subseteq\Lambda_N$.

By Lemmas \ref{lemma:family} and \ref{lemma:seq_family}:
\[P_N(A)=\sum_{n=0}^{\infty} \dfrac{1}{n!}
\sum_{\gamma}\langle I_A,\Phi_{\gamma}\rangle_0=\sum_{n=0}^{\infty} \dfrac{1}{n!}
\sum_{\Gamma}\dfrac{n!}{\Gamma!}\langle I_A,\Phi_{\Gamma}\rangle_0=\sum_{n=0}^{\infty}
\sum_{\Gamma}\dfrac{1}{\Gamma !}\langle I_A,\Phi_{\Gamma}\rangle_0.\]

The first inner sum has only sequences $\gamma=(B_1,\ldots,B_n)$ such that \\$(Q,B_1,$ $\ldots,B_n)$ are connected. Therefore the last inner sum has only $Q$-connected families $\Gamma$ and
\[P_N(A)=\sum_{n=0}^{\infty} J_A(N,n).\] 
\end{proof}

\begin{lemma}
Suppose $0\leqslant\lambda\leqslant\lambda_0$ and $n>3$. Then 
\[\Big|J_A(N,n)\Big|\leqslant 2^{2q}P_0(A)0.9^n (n+1).\]
\label{lemma:main_0}
\end{lemma}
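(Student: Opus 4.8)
The plan is to bound $J_A(N,n)$ term-by-term via the triangle inequality and then combine the per-family semi-invariant estimate with the count of $Q$-connected families. First I would write
\[\bigl|J_A(N,n)\bigr|\leqslant\sum_{\Gamma}\frac{1}{\Gamma!}\bigl|\langle I_A,\Phi_{\Gamma}\rangle_0\bigr|,\]
where the sum is over all $Q$-connected families $\Gamma$ of length $n$ with each $C_i\subseteq\Lambda_N$. Since $n>3$, Lemma \ref{lemma:estim Mal 2} applies to each such family and gives $\frac{1}{\Gamma!}\bigl|\langle I_A,\Phi_{\Gamma}\rangle_0\bigr|<P_0(A)\lambda^n(3e^2L)^n(n+1)$. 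Crucially the factor $\Gamma!$ cancels against the weight $1/\Gamma!$, so this bound is uniform over all families of length $n$ and does not depend on the internal multiplicity structure of $\Gamma$.

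Next I would count the families. By Lemma \ref{lemma:estim_Gam}, the number of $Q$-connected families of length $n$ is less than $2^{2q}\bigl(2(8\nu)^{2r}\bigr)^n$; the extra constraint $C_i\subseteq\Lambda_N$ only reduces this count, so the same bound holds for the families appearing in $J_A(N,n)$. Multiplying the uniform per-family bound by this count yields
\[\bigl|J_A(N,n)\bigr|<2^{2q}P_0(A)(n+1)\Bigl(2(8\nu)^{2r}\cdot\lambda\cdot 3e^2L\Bigr)^n.\]

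It then remains to check that the base of the geometric factor is at most $0.9$. Using $\lambda\leqslant\lambda_0=\dfrac{1}{50L(8\nu)^{2r}}$ from Definition \ref{def:lambda_0}, the quantity $2(8\nu)^{2r}\cdot\lambda\cdot 3e^2L$ is at most $\dfrac{6e^2}{50}$, which is approximately $0.887<0.9$. Hence $\bigl(2(8\nu)^{2r}\cdot\lambda\cdot 3e^2L\bigr)^n\leqslant 0.9^n$ for every $n$, and the claimed inequality follows.

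The only real work here is bookkeeping: making sure the $\Gamma!$ produced by Lemma \ref{lemma:estim Mal 2} exactly cancels the $1/\Gamma!$ weight so that every family contributes the same bound, and verifying the single numerical inequality $6e^2/50<0.9$. I do not expect a genuine obstacle — this lemma is precisely the point at which the definition of $\lambda_0$ is calibrated to absorb all the accumulated constants $(3e^2L)$ and $(2(8\nu)^{2r})$ into a contraction factor strictly below $1$, which is exactly what will later make the series $\sum_n J_A(N,n)$ converge geometrically and uniformly in $N$.
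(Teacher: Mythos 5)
Your proof is correct and follows essentially the same route as the paper's: bound $|J_A(N,n)|$ by the triangle inequality, cancel $\Gamma!$ against $1/\Gamma!$ using Lemma \ref{lemma:estim Mal 2}, count the families via Lemma \ref{lemma:estim_Gam} (dropping the restriction $C_i\subseteq\Lambda_N$, which only enlarges the sum), and verify $\lambda_0\cdot 6e^2L(8\nu)^{2r}=6e^2/50<0.9$. No gaps; the bookkeeping you flag is exactly the content of the paper's argument.
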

\begin{proof}
Suppose $0\leqslant\lambda\leqslant\lambda_0$ and $n>3$. 
$$\Big|J_A(N,n)\Big|\leqslant
\sum_{\Gamma}\dfrac{1}{\Gamma!}\Big|\langle I_A,\Phi_\Gamma\rangle_0\Big|,$$
where the sum is taken over all $Q$-connected families $\Gamma=\{(C_1,n_1), \ldots,(C_k,n_k)\}$ of length $n$ (we omit the restriction $C_i\subseteq\Lambda_N$).
By Lemma \ref{lemma:estim_Gam} the number of addends in this sum is less than $2^{2q}\left(2(8\nu)^{2r}\right)^n$. 

By Lemma \ref{lemma:estim Mal 2},
$$\Big|\langle I_A,\Phi_\Gamma\rangle_0\Big|\leqslant P_0(A)(n+1) \lambda^n(3e^2L)^n\Gamma!$$
and
\begin{multline*}
\Big|J_A(N,n)\Big|\leqslant \sum_{\Gamma}
\dfrac{1}{\Gamma!}P_0(A) (n+1)\lambda^n(3e^2L)^n\Gamma!
=\sum_{\Gamma}
P_0(A)(n+1) \lambda^n(3e^2L)^n
\\
\leqslant
2^{2q}\left(2(8\nu)^{2r}\right)^nP_0(A)(n+1)\lambda^n(3e^2L)^n
=2^{2q}P_0(A)(n+1)\left[\lambda\cdot6e^2L(8\nu)^{2r}\right]^n.
\end{multline*}

Since $0\leqslant\lambda\leqslant\lambda_0$, then by the Definition \ref{def:lambda_0} of $\lambda_0$ we have:
\[\lambda\cdot6e^2L(8\nu)^{2r}\leqslant\lambda_0\cdot6e^2L(8\nu)^{2r}
=\dfrac{6e^2}{50}
< 0.9\text{ and }\]
\[\Big|J_A(N,n)\Big|\leqslant 2^{2q}P_0(A)(n+1)0.9^n.\]
\end{proof}

\begin{lemma}
Suppose $0\leqslant\lambda\leqslant\lambda_0$. Then the series
\[\sum_{n=0}^{\infty} 
J_A(N,n)\]
converges absolutely and uniformly for all $N\in\mathbb{N}_Q$.
\label{lemma:main_1}
\end{lemma}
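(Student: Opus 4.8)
The plan is to apply the Weierstrass $M$-test: I will produce a sequence of constants $M_n$, independent of $N$, such that $|J_A(N,n)|\leqslant M_n$ for every $N\in\mathbb{N}_Q$ and $\sum_{n=0}^\infty M_n<\infty$. Since the bound in Lemma \ref{lemma:main_0} is already uniform in $N$, this is the natural route, and uniform summability of the $M_n$ delivers both absolute and uniform convergence at once.

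First I would dispose of the tail. For $n>3$, Lemma \ref{lemma:main_0} gives directly
\[\bigl|J_A(N,n)\bigr|\leqslant 2^{2q}P_0(A)\,0.9^n(n+1),\]
and the right-hand side does not depend on $N$, so I set $M_n=2^{2q}P_0(A)\,0.9^n(n+1)$ for $n>3$. The series $\sum_{n>3}M_n$ converges by the ratio test, since consecutive terms have ratio $0.9\cdot\frac{n+2}{n+1}\to 0.9<1$; thus this is a convergent majorant for the tail.

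It then remains to bound the finitely many low-order terms $n\in\{0,1,2,3\}$ uniformly in $N$. For $n=0$ the only $Q$-connected family of length $0$ is the empty family, and $J_A(N,0)=\langle I_A\rangle_0=P_0(A)$, independent of $N$. For $n\in\{1,2,3\}$ I would discard the restriction $C_i\subseteq\Lambda_N$ and estimate
\[\bigl|J_A(N,n)\bigr|\leqslant\sum_\Gamma\frac{1}{\Gamma!}\bigl|\langle I_A,\Phi_\Gamma\rangle_0\bigr|,\]
where the sum runs over all $Q$-connected families of length $n$. The number of such families is finite and independent of $N$: for $n\in\{2,3\}$ this is Lemma \ref{lemma:estim_Gam}, and for $n=1$ it holds because a length-one family $\{(C_1,1)\}$ forces $C_1$ to meet $Q$, of which there are at most $|Q|(4\nu)^{2r-1}$ sets by Lemma \ref{lemma:estim_2}. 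Each summand is bounded independently of $N$ by Lemma \ref{lemma:estim Mal}, which (unlike Lemma \ref{lemma:estim Mal 2}) carries no restriction $n>3$. Hence $\sup_{N}|J_A(N,n)|$ is a finite constant, which I take as $M_n$ for each of these four values.

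Combining the two regimes gives $|J_A(N,n)|\leqslant M_n$ for all $n\geqslant0$ and all $N\in\mathbb{N}_Q$, with $\sum_{n=0}^\infty M_n<\infty$, and the Weierstrass $M$-test yields absolute and uniform convergence of $\sum_n J_A(N,n)$ in $N$. The only mild obstacle is that the low-order terms fall outside Lemma \ref{lemma:main_0} (the Stirling step there genuinely fails for $n\leqslant3$); but there are only four such terms, and each is manifestly $N$-independent once the ambient-cube restriction is dropped, so they contribute nothing essential and the argument rests on the geometric tail estimate.
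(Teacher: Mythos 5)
Your proof is correct and takes essentially the same route as the paper's: the bound of Lemma \ref{lemma:main_0} is independent of $N$, and the convergent majorant $\sum_{n}0.9^n(n+1)$ gives absolute and uniform convergence via the Weierstrass $M$-test. The paper's one-line proof silently skips the terms $n\leqslant 3$ (for which Lemma \ref{lemma:main_0} does not apply); your explicit $N$-independent bounds for those four terms --- or simply the remark that finitely many terms never affect absolute or uniform convergence --- close that small gap.
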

\begin{proof}
This follows from Lemma \ref{lemma:main_0}, since the series $\sum\limits_{n=0}^{\infty} 0.9^n (n+1)$ converges.
\end{proof}

\begin{lemma}
Suppose $0\leqslant\lambda\leqslant\lambda_0$. Then for any $n$ the following limit exists:
\[\lim\limits_{N\rightarrow\infty}J_A(N,n).\]
\label{lemma:main_2}
\end{lemma}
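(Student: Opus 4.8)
The plan is to use the observation that each summand $\tfrac{1}{\Gamma!}\langle I_A,\Phi_\Gamma\rangle_0$ of $J_A(N,n)$ depends only on the family $\Gamma$ and on the fixed initial measure $P_0$, and is entirely independent of $N$; the sole $N$-dependence is the constraint $C_i\subseteq\Lambda_N$ that selects which families enter the sum. Accordingly I would prove the sharper statement that, for each fixed $n$, $J_A(N,n)$ is eventually \emph{constant} in $N$: there is $N_0=N_0(n)\in\mathbb{N}_Q$ with $J_A(N,n)=J_A(N_0,n)$ for all $N\geqslant N_0$, from which the limit exists at once.

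The crux is a localisation estimate showing that every $Q$-connected family of length $n$ lives inside one fixed finite region around $Q$. Given such a family $\Gamma=\{(C_1,n_1),\ldots,(C_k,n_k)\}$, connectedness of $(Q,C_1,\ldots,C_k)$ lets me reorder the sets, exactly as in the inductive construction in the proof of Lemma \ref{lemma:estim_Gam}, so that $C_1$ meets $Q$ and each $C_j$ meets $Q\cup C_1\cup\cdots\cup C_{j-1}$. By Lemma \ref{lemma:assoc_graph} the associated graph of $C_j$ is a tree with $S(C_j)\leqslant r$ edges, so any two points of $C_j$ are joined along it by at most $r$ unit-length edges; hence $C_j$ has diameter at most $r$ in the metric $\|\cdot\|$. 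Fix $\boldsymbol{t}^*\in Q$ and set $d=\max\{\|\boldsymbol{t}^*-\boldsymbol{s}\|:\boldsymbol{s}\in Q\}$. A short induction on $j$ then shows that every point of $C_j$ lies within distance $d+jr\leqslant d+nr$ of $\boldsymbol{t}^*$: the base case uses a common point of $C_1\cap Q$, and the inductive step uses a common point of $C_j$ with $Q\cup C_1\cup\cdots\cup C_{j-1}$ together with the diameter bound. Consequently all sets of every such family are contained in the finite set $R_n=\{\boldsymbol{t}\in\mathbb{Z}^\nu:\|\boldsymbol{t}-\boldsymbol{t}^*\|\leqslant d+nr\}$, and since $R_n$ is finite I may choose $N_0$ with $R_n\subseteq\Lambda_{N_0}$.

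The conclusion is then immediate. For every $N\geqslant N_0$ we have $\Lambda_N\supseteq\Lambda_{N_0}\supseteq R_n$, so the restriction $C_i\subseteq\Lambda_N$ is automatically satisfied by \emph{every} $Q$-connected family of length $n$. Thus for all such $N$ the sum $J_A(N,n)$ ranges over one and the same collection of families, which is finite by Lemma \ref{lemma:estim_Gam}, and each of its terms is a fixed real number independent of $N$. Hence $J_A(N,n)=J_A(N_0,n)$ for all $N\geqslant N_0$, and $\lim_{N\to\infty}J_A(N,n)$ exists.

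I do not anticipate a genuine obstacle here; the only step demanding care is the localisation bound, namely verifying that $Q$-connectedness together with $\mathrm{diam}(C_j)\leqslant r$ confines all sets of the family to a ball of radius $d+nr$ about $Q$. Once this is established, the index set of admissible families stabilises for large $N$, every term is constant, and no analytic convergence argument is needed. This is in contrast with Lemma \ref{lemma:main_1}, where convergence uniform in $N$ of the series over $n$ genuinely has to be proven.
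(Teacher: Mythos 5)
Your proposal is correct and follows essentially the same route as the paper: the paper also proves that $J_A(N,n)$ stabilises for $N\geqslant M_n$ with $M_n=r(n+1)+q+d$, by using $Q$-connectedness and the triangle inequality (via a chain of pairwise-intersecting sets linking any $C_i$ to $Q$, each of diameter at most $r$) to confine every admissible family to a fixed finite region, so that the constraint $C_i\subseteq\Lambda_N$ becomes vacuous. Your only deviations are cosmetic: you centre the localisation ball at a point of $Q$ rather than at the origin and organise the triangle-inequality bound as an induction along a reordering of the family instead of along a connecting chain.
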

\begin{proof}
Denote $\boldsymbol{0}=(0,\ldots,0)$, the origin in $\mathbb{Z}^\nu$, 
and $d=\min\{\|\boldsymbol{t}-\boldsymbol{0}\|\;\big|\boldsymbol{t}\in Q\}$; $d$ is the distance of the set $Q$ from the origin. 

For any $n$ denote $M_n=r(n+1)+q+d$. 
We will prove that for any $N\geqslant M_n$:
\begin{equation}
J_A(N,n)=J_A(M_n,n).
\label{eq:stability}
\end{equation}
Then for any fixed $n$, $\lim\limits_{N\rightarrow\infty}J_A(N,n)$ exists and
$\lim\limits_{N\rightarrow\infty}J_A(N,n)=J_A(M_n,n).$

\begin{center}
Proof of \eqref{eq:stability}
\end{center}

Consider any $N\geqslant M_n$. Then $\Lambda_{M_n}\subseteq\Lambda_N$, which implies the following.

\begin{multline}
\text{If }\Gamma=\{(C_1,n_1), \ldots,(C_k,n_k)\}\text{ is a }Q\text{-connected family of length }n
\\ 
\text{ such that each }
C_i\subseteq\Lambda_{M_n},\text{ then each }C_i\subseteq\Lambda_{N}.
\label{eq:connect_1}
\end{multline}

It remains to prove:
\begin{multline}
\text{If }\Gamma=\{(C_1,n_1), \ldots,(C_k,n_k)\}\text{ is a }Q\text{-connected family of length }n
\\ 
\text{ such that each }
C_i\subseteq\Lambda_{N},\text{ then each }C_i\subseteq\Lambda_{M_n}.
\label{eq:connect_2}
\end{multline}

Then from \eqref{eq:connect_1} and \eqref{eq:connect_2} we imply that $J_A(N,n)$
 and $J_A(M_n,n)$ are sums over the same set of families $\Gamma$. Hence $J_A(N,n)=J_A(M_n,n)$.

\begin{center}
Proof of \eqref{eq:connect_2}
\end{center}

Consider a $Q$-connected family $\Gamma=\{(C_1,n_1), \ldots,(C_k,n_k)\}$ of length $n$ such that each $C_i\subseteq\Lambda_{N}.$
Fix $i=1,\ldots,k$. To prove that $C_i\subseteq\Lambda_{M_n}$, we fix an arbitrary $\boldsymbol{t}\in C_i$ and prove that $\boldsymbol{t}\in \Lambda_{M_n}$.

For some $\boldsymbol{t}_0\in Q$, $d=\|\boldsymbol{t}_0-\boldsymbol{0}\|$. 
Since $(Q,C_1,\ldots,C_n)$ is connected, there are a subsequence $(C_{j_1},\ldots,C_{j_m})$ of the sequence $(C_1,\ldots,C_k)$ and points $\boldsymbol{t}_1,\boldsymbol{t}_2,\ldots,\boldsymbol{t}_m,\boldsymbol{t}_{m+1}$ such that
\[\boldsymbol{t}_1\in C_i\cap C_{j_1},\;\boldsymbol{t}_2\in C_{j_1}\cap C_{j_2},\;\ldots,\boldsymbol{t}_m\in C_{j_{m-1}}\cap C_{j_m},
\;\text{and }\boldsymbol{t}_{m+1}\in C_{j_m}\cap Q.\]

Then $m\leqslant k\leqslant n$ and
\begin{multline*}
\|\boldsymbol{t}-\boldsymbol{0}\|
\leqslant
\|\boldsymbol{t}-\boldsymbol{t}_1\|+
\|\boldsymbol{t}_1-\boldsymbol{t}_2\|+\ldots+\|\boldsymbol{t}_m-\boldsymbol{t}_{m+1}\|+
\|\boldsymbol{t}_{m+1}-\boldsymbol{t}_0\|+
\|\boldsymbol{t}_0-\boldsymbol{0}\|
\\
\leqslant r(m+1)+q+d
\leqslant r(n+1)+q+d=M_n.
\end{multline*}

So $\|\boldsymbol{t}-\boldsymbol{0}\|\leqslant M_n$ and $\boldsymbol{t}\in \Lambda_{M_n}$.
\end{proof}
\begin{center}
\textbf{Proof of Theorem \ref
{theorem:Gibbs_measure}
}
\end{center}

\begin{proof}
1. Suppose $0\leqslant\lambda\leqslant\lambda_0$.
By Lemma \ref
{lemma:family_sum}, 
\begin{equation}
P_N(A)=\sum_{n=0}^{\infty} J_A(N,n).
\label{eq:J_sum}
\end{equation}

Due to Lemmas \ref{lemma:main_1}, \ref{lemma:main_2} and a property of uniform convergence, $\lim\limits_{N\rightarrow\infty}P_N(A)$ exists and 
\begin{equation}
P_{\lambda,Q}(A)=\lim\limits_{N\rightarrow\infty}P_N(A)=\sum_{n=0}^{\infty}\lim\limits_{N\rightarrow\infty} J_A(N,n).
\label{eq:limit}
\end{equation}

2. To prove that $P_{\lambda}$ is a probability measure on $(\Omega,\Sigma_Q)$ we check three probability axioms.

\[P_{\lambda,Q}(\varnothing)=\lim\limits_{N\rightarrow\infty}P_N(\varnothing)=\lim\limits_{N\rightarrow\infty} 0=0.\]

\[P_{\lambda,Q}(\Omega)=\lim\limits_{N\rightarrow\infty}P_N(\Omega)=\lim\limits_{N\rightarrow\infty} 1=1.\]

To complete the proof it remains to check the axiom of $\sigma$-additivity.
Consider a sequence of disjoint events $A_i\in\Sigma_Q$ $(i=1,2,\ldots)$ and denote $D=\bigcup\limits_{i=1}^\infty A_i$.
By  \eqref{eq:J_sum},
\begin{equation}
P_N(D)=\sum_{i=1}^{\infty}P_N(A_i)
=\sum_{i=1}^{\infty}
\sum_{n=0}^{\infty}
J_{A_i}(N,n).
\label{eq:last_series}
\end{equation}

By Lemma \ref{lemma:main_0}, for $n>3$ and $i=1,2,\ldots,$
\[|J_{A_i}(N,n)|\leqslant 2^{2q}P_0(A_i)0.9^n(n+1).\]

Since for $0<x<1$,
$\sum\limits_{n=0}^{\infty}x^n(n+1)=\dfrac{1}{(1-x)^2},$
we have:
\begin{multline*}
\sum_{i=1}^{\infty}
\sum_{n=0}^{\infty}2^{2q}P_0(A_i)0.9^n(n+1)
=2^{2q}\sum_{i=1}^{\infty}
P_0(A_i)
\sum_{n=0}^{\infty}
0.9^n(n+1)
\medskip
\\
=2^{2q}\sum_{i=1}^{\infty}
P_0(A_i)\frac{1}{(1-0.9)^2}
=2^{2q}100\sum_{i=1}^{\infty}
P_0(A_i)=2^{2q}100P_0(D).
\end{multline*}

So the series on the right-hand side of \eqref{eq:last_series} converges absolutely and uniformly for all $N\in\mathbb{N}_Q$. Taking a limit in \eqref{eq:last_series} and using \eqref{eq:limit}, we get:
\[P_{\lambda,Q}(D)
=\lim\limits_{N\rightarrow\infty}P_N(D)
=\sum_{i=1}^{\infty}
\sum_{n=0}^{\infty}
\lim\limits_{N\rightarrow\infty}J_{A_i}(N,n)
=\sum_{i=1}^{\infty}P_{\lambda,Q}(A_i).\]

This proves that 
\[P_{\lambda,Q}\left(\bigcup\limits_{i=1}^\infty A_i\right)
=\sum_{i=1}^{\infty}P_{\lambda,Q}(A_i).\]
\end{proof}

\bibliographystyle{plain}
\bibliography{ilias}

\end{document}